\theoremstyle{plain}
\newtheorem{theorem}{Theorem}[section]
\newtheorem{lemma}[theorem]{Lemma}
\newtheorem{prop}[theorem]{Proposition}
\theoremstyle{definition}
\begin{document}
	\title[Images of multilinear polynomials]{Images of multilinear polynomials\\ on generalized quaternion algebras}
		
\author[P. V. Danchev]{Peter V. Danchev $^{1,*}$}
\author[T. H. Dung]{Truong Huu Dung $^{2}$}
\author[T. N. Son]{Tran Nam Son $^{3,4}$}

\address{[1] Institute of Mathematics and Informatics, Bulgarian Academy of Sciences, 1113 Sofia, Bulgaria}
\address{[2] Department of Mathematics, Dong Nai University, 9 Le Quy Don Str., Tan Hiep Ward, Bien Hoa City, Dong Nai Province, Vietnam}
\address{[3] Faculty of Mathematics and Computer Science, University of Science, Ho Chi Minh City, Vietnam}
\address{[4] Vietnam National University, Ho Chi Minh City, Vietnam}

\email{\newline Peter V. Danchev: danchev@math.bas.bg or pvdanchev@yahoo.com; \newline
	Truong Huu Dung: thdung@dnpu.edu.vn or dungth0406@gmail.com; \newline
	Tran Nam Son: trannamson1999@gmail.com}
	
	\keywords{Quaternion algebras, multilinear polynomial evaluations, Kaplansky conjecture \\
		\protect \indent 2020 {\it Mathematics Subject Classification.} 14A22; 16H05;  16K20\\ \protect \indent * Corresponding author:  Peter V. Danchev}
	
    \maketitle
    
	\begin{abstract}
The main goal of this paper is to extend [J. Algebra Appl. 20 (2021), 2150074] to generalized quaternion algebras, even when these algebras are not necessarily  division rings. More precisely, in such cases, the image of a multilinear polynomial evaluated on a quaternion algebra is a vector space and we additionally provide a classification of possible images.
	\end{abstract}

\section{Introduction}

Throughout of the body of this article, let $m$ be an integer greater than $1$ and let $F\langle X\rangle$ be the free associative algebra over $F$ which is freely generated by the set $X=\{x_1,x_2,\ldots,x_m\}$ of $m$ noncommutative variables. We refer to elements of $F\langle X\rangle$ as \textit{polynomials}. A polynomial $p\in F\langle X\rangle$ is called \textit{multilinear} of degree $m$ if it is of the form: $$p(x_1,\ldots,x_m)=\sum_{\sigma\in S_m}\lambda_\sigma x_{\sigma(1)}\cdots x_{\sigma(m)}$$ in which $S_m$ is the symmetric group in $m$ letters and the coefficients $\lambda_\sigma$ are constants in $F$. For any $F$-algebra $A$ and $p\in F\langle X\rangle$, we set $p(A)=\{p(a_1,\ldots,a_m)\mid a_1,\ldots,a_m\in A\}$ which is called the \textit{image} of $p$ evaluated on $A$. The question which subsets of the matrix ring over $F$ are images of polynomials  was reputedly raised by Kaplansky according to \cite{Pa_KaMaRo_12}. On the other hand, in \cite[Entry 1.98]{Bo_FiKhSh_93}, Lvov conjectured that the image of a multilinear polynomial evaluated on the matrix ring over a field is a vector space. A special case on polynomials of degree two has been known for a long time (see \cite{Pa_AlMu_57,Pa_Sho_36}), which is extended by Mesyan \cite{Pa_Zak_13} to multilinear polynomial of small degree. Meanwhile, in 2012, Kanel-Belov, Malev, and Rowen \cite{Pa_KaMaRo_12} gave a complete description in case the algebra of $2\times 2$ matrices over a quadratically closed field and with the partial solution after that. In spite of many efforts, however, conjectures seems to be far from being resolved. Some variations have been studied extensively. For example, the images on Lie algebras \cite{Pa_AnEm_15} as well as Jordan algebras \cite{Pa_Ma_22} have been discussed. For the most recent results on images of polynomials, we recommend the survey \cite{Pa_KaMaRo_20}.

Let $F$ be a field. When the characteristic of $F$ is not $2$, a \textit{quaternion algebra} over $F$ is a ring that is a $4$-dimensional vector space over $F$ with a basis $1,i,j,k$ with the following multiplicative relations: $i^2\in F$ and $j^2\in F$ are nonzero and $k=ij=-ji$ and every $c\in F$ commutes with $i$ and $j$. In case of the characteristic of $F$ is $2$, a \textit{quaternion algebra} over $F$ is similarly defined as a ring as well as a $4$-dimension vector space over $F$ with relations: $j^2\in F$ is nonzero and $i^2+i\in F$ and $k=ij=j(i+1)$. Moreover, $\mathbb{H}_F$ is called a \textit{division quaternion algebra} if $\mathbb{H}_F$ is a division ring. Let us see more in \cite{Bo_Vo_21}. In 2021, Malev \cite{Pa_Ma_21} gave a complete description of the images of multilinear polynomials on the division quaternion algebra over the field of real numbers. In the present paper, we will extend the work of Malev \cite{Pa_Ma_21} to certain quaternion algebras.  More precisely, let $\mathbb{H}_F$ be a quaternion algebra over a field $F$ and let $p\in F\langle X\rangle$ be multilinear, if $p(\mathbb{H}_F)$ is neither $\{0\}$ nor $F$, then we can classify the possible cases:
\medskip

\noindent\textbf{Case 1.} $\mathbb{H}_F$ is not a division ring. Then, $p(\mathbb{H}_F)$ contains the set (resp. $\{\alpha i+\beta j+\gamma k\mid \alpha,\beta,\gamma \in F\}$) $\{\alpha j^2+\beta j+\gamma k\mid \alpha,\beta,\gamma \in F\}$ if the characteristic of $F$ is (resp. not) $2$.
\medskip

\noindent\textbf{Case 2.} $\mathbb{H}_F$ is  a division ring. If characteristic of $F$ is not $2$, then $p(\mathbb{H}_F)\in\{\{0\}, F, \{\alpha i+\beta j+\gamma k\mid \alpha,\beta,\gamma \in F\}, \mathbb{H}_F\}$ in either of the following cases:
\begin{enumerate}[\rm (i)]
	\item $F$ is a quadratically closed field,  that is, a field in which every element has a square root
	\item $i^2=j^2=-1$ and $F$ is a Pythagorean field, that is, a field in which every sum of two squares is a square.
\end{enumerate} These statements are mentioned in Section 2. In sections 3 and 4, using this result, we  investigate products of images of multilinear polynomials which is initiated by \cite{Pa_DaDuSo_23,Pa_DuSo} and trace vanishing polynomials of quaternion algebras which is an analog version in \cite{Pa_KaMaRo_16}.

\section{Quaternion algebras}

First, we together make some interesting observations. According to \cite[Main Theorem 5.4.4 and Theorem 6.4.11]{Bo_Vo_21}, if the quaternion algebra $\mathbb{H}_F$ over a field $F$ is not a division ring, then $\mathbb{H}_F$ is isomorphic to the matrix ring $\mathrm{M}_2(F)$ over $F$ as an $F$-algebra. According to \cite[Theorem 1]{Pa_Ma_14}, if $p\in F\langle X\rangle$ is multilinear, then $p(\mathrm{M}_2(F))$ is either $\{0\}$, or $F1$ (the set of scalar matrices), or $p(A)$ contains the set $\mathrm{sl}_2(F)$ of matrices whose traces are zero. In particular, if $F$ is the field of real numbers, then $p(A)\in\{\{0\}, F1,\mathrm{sl}_2(F),\mathrm{M}_2(F)\}$, which is the same result if $F$ is a quadratically closed field (see \cite{Pa_KaMaRo_12}). It is well known by \cite{Pa_AlMu_57,Pa_Sho_36} that $\mathrm{sl}_2(F)$ coincides with $s_2(\mathrm{M}_2(F))$ where $s_2$ stands for $s_2=x_1x_2-x_2x_1$ is a polynomial of degree two in $F\langle x_1,x_2\rangle$ which is frequently called the \textit{standard} polynomial of degree two. On the other hand, $F1$ is itself the center of $\mathrm{M}_2(F)$ which is frequently denoted by $Z(\mathrm{M}_2(F))$. Hence, by further using the result of \cite{Pa_Ma_21}, we obtain the following result:

\begin{prop}\label{not division}
	Let $\mathbb{H}_F$ be a quaternion algebra over a field $F$ and let $p\in F\langle X\rangle$ be multilinear. 
	\begin{enumerate}[\rm (i)]
		\item If $\mathbb{H}_F$ is not a division ring, then $p(\mathbb{H}_F)$ is either $\{0\}$, or $F$, or $p(\mathbb{H}_F)$ contains $s_2(\mathbb{H}_F)$. In particular, if $F$ is a quadratically closed field, then $p(\mathbb{H}_F)\in\{\{0\}, F,s_2(\mathbb{H}_F),\mathbb{H}_F\}$.
		\item If $F$ is the field of real numbers, then $p(\mathbb{H}_F)\in\{\{0\},F,s_2(\mathbb{H}_F),\mathbb{H}_F\}$.
	\end{enumerate}
\end{prop}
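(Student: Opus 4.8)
The plan is to deduce both parts from the results already quoted above, by transporting them along the structural isomorphism $\mathbb{H}_F\cong\mathrm{M}_2(F)$ and using the fact that an $F$-algebra isomorphism is compatible with the evaluation of any multilinear polynomial. Concretely, for part (i), since $\mathbb{H}_F$ is not a division ring, \cite[Main Theorem 5.4.4 and Theorem 6.4.11]{Bo_Vo_21} supplies an $F$-algebra isomorphism $\varphi\colon\mathbb{H}_F\to\mathrm{M}_2(F)$. Because $\varphi$ is an $F$-algebra homomorphism it commutes with every polynomial evaluation, so $\varphi(p(a_1,\ldots,a_m))=p(\varphi(a_1),\ldots,\varphi(a_m))$ for all $a_t\in\mathbb{H}_F$; letting the $a_t$ range over $\mathbb{H}_F$ yields $\varphi(p(\mathbb{H}_F))=p(\mathrm{M}_2(F))$, and the same identity holds with $p$ replaced by $s_2$. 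Thus $\varphi$ carries the three alternatives of \cite[Theorem 1]{Pa_Ma_14} for $p(\mathrm{M}_2(F))$ bijectively onto the three alternatives for $p(\mathbb{H}_F)$.

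It then remains to check that these pull back to the claimed sets. I would verify that $\{0\}$ pulls back to $\{0\}$; that the scalar matrices $F1=Z(\mathrm{M}_2(F))$ pull back to $Z(\mathbb{H}_F)=F$, since an algebra isomorphism matches centers; and that $\mathrm{sl}_2(F)=s_2(\mathrm{M}_2(F))$ pulls back to $\varphi^{-1}(s_2(\mathrm{M}_2(F)))=s_2(\mathbb{H}_F)$, so that the inclusion $p(\mathrm{M}_2(F))\supseteq s_2(\mathrm{M}_2(F))$ becomes $p(\mathbb{H}_F)\supseteq s_2(\mathbb{H}_F)$. The quadratically closed refinement runs identically, now fed by \cite{Pa_KaMaRo_12}, which pins $p(\mathrm{M}_2(F))$ down to exactly one of $\{0\},F1,\mathrm{sl}_2(F),\mathrm{M}_2(F)$; these four transport under $\varphi^{-1}$ to $\{0\},F,s_2(\mathbb{H}_F),\mathbb{H}_F$.

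For part (ii) I would split on whether $\mathbb{H}_{\mathbb{R}}$ is a division ring. In the non-division case the argument of part (i) applies verbatim, now using the real-field instance $p(\mathrm{M}_2(\mathbb{R}))\in\{\{0\},\mathbb{R}1,\mathrm{sl}_2(\mathbb{R}),\mathrm{M}_2(\mathbb{R})\}$ recorded above. In the division case $\mathbb{H}_{\mathbb{R}}$ is the Hamilton algebra, and \cite{Pa_Ma_21} gives $p(\mathbb{H}_{\mathbb{R}})\in\{\{0\},\mathbb{R},\{\alpha i+\beta j+\gamma k\mid \alpha,\beta,\gamma\in\mathbb{R}\},\mathbb{H}_{\mathbb{R}}\}$. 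The final task is to identify the set of pure quaternions with $s_2(\mathbb{H}_{\mathbb{R}})$: every commutator has zero reduced trace and is hence pure, while the explicit commutators $[i,j]=2k$, $[j,k]=2i$, $[k,i]=2j$ already realize each coordinate direction, so $s_2(\mathbb{H}_{\mathbb{R}})=\{\alpha i+\beta j+\gamma k\mid\alpha,\beta,\gamma\in\mathbb{R}\}$; substituting this back produces the stated four possibilities.

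The argument is largely bookkeeping once the transport principle is in place, so I expect no serious obstacle. The only points demanding genuine care are the verification that the three canonical subsets correspond correctly under $\varphi$ (center to center, and $s_2$-image to $s_2$-image), and, in part (ii), the small computation showing that $s_2(\mathbb{H}_{\mathbb{R}})$ is \emph{exactly} the space of pure quaternions rather than merely contained in it.
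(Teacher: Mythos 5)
Your proposal is correct and takes essentially the same route as the paper, which gives no formal proof but derives the proposition exactly as you do: transporting \cite[Theorem 1]{Pa_Ma_14}, the classification of \cite{Pa_KaMaRo_12} for quadratically closed fields, and the Shoda/Albert--Muckenhoupt identification $\mathrm{sl}_2(F)=s_2(\mathrm{M}_2(F))$ along the $F$-algebra isomorphism $\mathbb{H}_F\cong\mathrm{M}_2(F)$, with the center matching $F1\leftrightarrow F$, and invoking \cite{Pa_Ma_21} for the real division case. One small caveat in part (ii): realizing the coordinate directions $2i,2j,2k$ as commutators does not by itself yield $s_2(\mathbb{H}_{\mathbb{R}})=\{\alpha i+\beta j+\gamma k\mid\alpha,\beta,\gamma\in\mathbb{R}\}$, since images of multilinear polynomials are not a priori closed under addition (that is the content of the L'vov--Kaplansky conjecture); instead observe that $[a,b]=2(a\times b)$ for pure quaternions $a,b$ and the cross product is surjective onto $\mathbb{R}^3$, or simply apply the classification of \cite{Pa_Ma_21} to the multilinear polynomial $s_2$ itself and rule out $\{0\}$, $\mathbb{R}$, and $\mathbb{H}_{\mathbb{R}}$ by trace considerations.
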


The case that $\mathbb{H}_F$ is a division quaternion algebra over a field $F$ will be discussed right now. We  first consider multilinear polynomials of small degree. From the technique and strategy of \cite{Pa_CoWa_16} and \cite{Pa_Zak_13}, it follows immediately the following results, so it will be skipped.

\begin{prop}
	Let $\mathbb{H}_F$ be a quaternion algebra over a field $F$ and let $p\in F\langle X\rangle$ be multilinear.
	\begin{enumerate}[\rm (i)]
		\item If $p\in F\langle x_1,x_2\rangle$, then $p(\mathbb{H}_F)\in\{\{0\}, s_2(\mathbb{H}_F), \mathbb{H}_F\}$.
		\item If $p\in F\langle x_1,x_2,x_3\rangle$, then either $p(\mathbb{H}_F)\in\{\{0\}, s_2(\mathbb{H}_F), \mathbb{H}_F\}$ or $p$ has the form: $p=\lambda_1s_2(x_1,s_2(x_3,x_2))+\lambda_2s_2(x_3,s_2(x_1,x_2))$ where $\lambda_1,\lambda_2\in F$.
		\item If $p\in F\langle x_1,x_2,x_3,x_4\rangle$, then $p$ has the form:
		\begin{eqnarray*}
			p&=&\lambda_1s_2(s_2(s_2(x_2,x_1),x_3),x_4)+\lambda_2s_2(s_2(s_2(x_3,x_1),x_2),x_4)\\
			&+&\lambda_3s_2(s_2(s_2(x_4,x_1),x_2),x_3)+\lambda_4s_2(x_1,x_2)s_2(x_3,x_4)\\
			&+&\lambda_5s_2(x_1,x_3)s_2(x_2,x_4)+\lambda_6s_2(x_1,x_4)s_2(x_2,x_3)\\
			&+&\lambda_7s_2(x_2,x_3)s_2(x_1,x_4)+\lambda_8s_2(x_2,x_4)s_2(x_1,x_3)\\
			&+&\lambda_9s_2(x_3,x_4)s_2(x_1,x_2).
		\end{eqnarray*}
	\end{enumerate}
\end{prop}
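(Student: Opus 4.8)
The plan is to follow the degree-reduction strategy of Mesyan \cite{Pa_Zak_13} and Cohn–Wang \cite{Pa_CoWa_16}: substitute selected variables by the identity element $1\in\mathbb{H}_F$ to pass from degree $m$ to degree $m-1$, and to sort the possible images according to two scalar invariants of the coefficient vector $(\lambda_\sigma)_{\sigma\in S_m}$, namely the total sum $\Lambda=\sum_{\sigma}\lambda_\sigma$ and the partial sums of $\lambda_\sigma$ over the cyclic classes of $S_m$. First I would record the two elementary reductions that drive everything. If $\Lambda\neq 0$, then setting $x_2=\cdots=x_m=1$ gives $p(x_1,1,\dots,1)=\Lambda x_1$, whose image is already all of $\mathbb{H}_F$; since $1\in\mathbb{H}_F$ this set is contained in $p(\mathbb{H}_F)$, forcing $p(\mathbb{H}_F)=\mathbb{H}_F$. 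On the other hand, because the reduced trace $\mathrm{trd}$ is linear and satisfies $\mathrm{trd}(ab)=\mathrm{trd}(ba)$, the value $\mathrm{trd}(p(a_1,\dots,a_m))$ depends on $(\lambda_\sigma)$ only through the cyclic-class sums; when all of these vanish one obtains $p(\mathbb{H}_F)\subseteq\{q:\mathrm{trd}(q)=0\}=s_2(\mathbb{H}_F)$, using that $s_2(\mathbb{H}_F)$ is precisely the space of pure (trace-zero) quaternions.

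For the base case (i) these two observations are already decisive: writing $p=\lambda_1x_1x_2+\lambda_2x_2x_1$, either $\Lambda=\lambda_1+\lambda_2\neq0$ and $p(\mathbb{H}_F)=\mathbb{H}_F$, or $\lambda_1+\lambda_2=0$, whence $p=\lambda_1 s_2$ and $p(\mathbb{H}_F)$ is $\{0\}$ or $s_2(\mathbb{H}_F)$. For (ii) I would set $x_3=1$ to land on a degree-two polynomial and invoke (i): if the resulting polynomial has full image, then so does $p$. The polynomials not captured this way are exactly those lying in the augmentation ideal of $F[S_3]$; here I would use that the multilinear component of the free Lie algebra in degree three is two-dimensional, spanned by the iterated commutators $s_2(x_1,s_2(x_3,x_2))$ and $s_2(x_3,s_2(x_1,x_2))$, together with the Dynkin--Specht--Wever type fact that a multilinear associative polynomial surviving all the degree-reducing substitutions is congruent, modulo the identities of $\mathbb{H}_F$, to a Lie element. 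This isolates precisely the exceptional family displayed in (ii), whose images genuinely require the separate analysis of Malev \cite{Pa_Ma_21}.

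For (iii) the same machinery applies but the bookkeeping is heavier. After disposing of the $\Lambda\neq0$ case I would work modulo the identities satisfied by $\mathbb{H}_F$; the crucial one is the standard identity $s_4=0$ (Amitsur--Levitzki, valid since $\mathbb{H}_F\otimes_F\overline{F}\cong\mathrm{M}_2(\overline{F})$), which together with the Jacobi identity collapses the $24$-dimensional space of multilinear monomials onto the span of the nine displayed generators: the three triple-nested commutators $s_2(s_2(s_2(\,\cdot\,,\,\cdot\,),\,\cdot\,),\,\cdot\,)$ and the six products $s_2(\,\cdot\,,\,\cdot\,)\,s_2(\,\cdot\,,\,\cdot\,)$. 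Each listed generator has vanishing cyclic sums, consistent with the trace-zero constraint, so any $p$ surviving the reduction is a linear combination of them. I expect the main obstacle to be exactly this degree-four reduction: verifying that $s_4=0$ and Jacobi suffice to eliminate every monomial not accounted for by the nine generators (in particular, that the six-dimensional multilinear Lie component really collapses to the three listed nestings on $\mathbb{H}_F$), and checking that no further relation shortens the list. A secondary but unavoidable difficulty is the characteristic-two case, where the symmetric/antisymmetric splitting based on $\tfrac{1}{2}$ is unavailable and the very definition of $\mathbb{H}_F$ changes, so the trace and all the reductions must be re-derived directly from the relations $j^2\in F$, $i^2+i\in F$, and $k=j(i+1)$.
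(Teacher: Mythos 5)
The paper offers essentially no proof of this proposition (it is ``skipped,'' with a pointer to the unit-substitution technique of Mesyan and Cordwell--Wang), so the comparison is with that intended route; your reduction by substituting $1$ and your part (i) match it and are correct. The clear-cut failure is part (iii). You claim that Amitsur--Levitzki $s_4=0$ plus the Jacobi identity collapses the $24$-dimensional multilinear space onto the span of the nine displayed generators \emph{modulo the identities of} $\mathbb{H}_F$. That is false: each of the nine generators vanishes identically once any single variable is set to $1$, so every $q$ in their span satisfies $q(a,b,1,1)=0$ for all $a,b\in\mathbb{H}_F$; hence $x_1x_2x_3x_4$, and even the coefficient-sum-zero polynomial $s_2(x_1,x_2)x_3x_4$, differ from every such $q$ by a polynomial that is \emph{not} an identity of $\mathbb{H}_F$ (evaluate at $(a,b,1,1)$ with $ab\neq ba$). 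So the collapse fails even after the case $\Lambda\neq0$ is disposed of. Moreover, since (iii) asserts how $p$ is literally written in $F\langle X\rangle$, no argument conducted modulo $\mathrm{Id}(\mathbb{H}_F)$ could ever prove it -- the same objection applies to your ``congruent modulo the identities of $\mathbb{H}_F$ to a Lie element'' phrasing in (ii). The statement that is both true and needed is purely combinatorial, with no PI theory of $\mathbb{H}_F$ involved: a multilinear polynomial whose single-variable unit substitutions all vanish \emph{identically in $F\langle X\rangle$} is a proper polynomial (every variable occurs inside a commutator), and the multilinear proper polynomials of degree $n$ have dimension equal to the derangement number ($2$ for $n=3$, $9$ for $n=4$) and are spanned exactly by the displayed elements. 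In particular, the hypothesis needed for the nine-term normal form is ``all unit substitutions of $p$ vanish,'' not ``$\Lambda=0$''; for the remaining polynomials one must prove the image alternative (the disjunct visibly missing from the paper's own wording of (iii)), which your proposal also does not do.

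The second gap is the trichotomy in (ii) itself. If $\Lambda=0$ and $p$ is not a Lie element, some unit substitution of $p$ is a nonzero scalar multiple of $s_2$, so $s_2(\mathbb{H}_F)\subseteq p(\mathbb{H}_F)$; it remains to show $p(\mathbb{H}_F)$ equals $s_2(\mathbb{H}_F)$ or $\mathbb{H}_F$, and here your tools run out. Your trace bound $p(\mathbb{H}_F)\subseteq\{q:\mathrm{trd}(q)=0\}$ applies only when \emph{all} cyclic coefficient sums vanish, and upgrading it to $p(\mathbb{H}_F)=s_2(\mathbb{H}_F)$ uses the identification of $s_2(\mathbb{H}_F)$ with the trace-zero space, which is not free: it is the content of Theorem~\ref{s2}, proved in the paper only under Pythagorean or characteristic-$2$ hypotheses, while the proposition is stated over an arbitrary $F$. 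Worse, when the cyclic sums do not all vanish -- and this genuinely happens: for $p=s_2(x_1,x_2)x_3$ or $p=s_2(x_1,x_2)x_3+x_3s_2(x_1,x_2)$ in characteristic not $2$, the image contains $s_2(i,j)k=2k^2$, a nonzero scalar -- the claimed trichotomy forces $p(\mathbb{H}_F)=\mathbb{H}_F$, and you offer no argument at all for this. It can be done (for instance, $s_2(\mathbb{H}_F)$ contains invertible elements $v$, and $q=v\cdot(v^{-1}q)$ settles $s_2(x_1,x_2)x_3$; a general degree-$3$ polynomial with $\Lambda=0$ needs a case analysis or a conjugation argument of the kind used in Theorem~\ref{division}), but as written this case is simply absent, so (ii) -- and with it the image disjunct required in (iii) -- remains unproved.
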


Note that if $p=\lambda_1s_2(x_1,s_2(x_3,x_2))+\lambda_2s_2(x_3,s_2(x_1,x_2))$ and the characteristic of $F$ is $0$, then by using \cite[Theorem 3.4]{Pa_LiTsui_16}, it follows that $p(\mathbb{H}_F)$ contains elements of reduced trace $0$. From these observations, the next work we can think of is studying about $s_2(\mathbb{H}_F)$. On the other hand, if $p\in F\langle X\rangle$ is multilinear, then $$p(x_1,\ldots,x_m)=\sum_{\sigma\in S_m}\lambda_\sigma x_{\sigma(1)}\cdots x_{\sigma(m)}$$ in which $S_m$ is the symmetric group in $m$ letters and the coefficients $\lambda_\sigma$ are constants in $F$. From \cite[Lemma 3.3]{Pa_GaMe_22}, it follows that
\begin{enumerate}[\rm (i)]
	\item If $\sum_{\sigma\in S_m}\lambda_\sigma\neq0$, then by  $p(\mathbb{H}_F)=\mathbb{H}_F$.
	\item $\sum_{\sigma\in S_m}\lambda_\sigma=0$ if and only if $p$ belongs to the T-ideal $\langle s_2(x_1,x_2)\rangle^T$ generated by the polynomial $s_2$.
\end{enumerate} Hence, the investigation of $s_2(\mathbb{H}_F)$ play an important role.

To do this, we need a series technical claims as follows and recall that  a Pythagorean field is a field in which every sum of two squares is a square. Let us see more in \cite{Bo_Ra_93}.

\begin{lemma}\label{s2-1}
	Let $F$ be a field. If $a,b,c$ belong to $F$, then there exist $x_1,x_2,x_3\in F$ such that
	$$\begin{cases}
		x_1^2+x_2^2+x_3^2=1,\\
		ax_1+bx_2+cx_3=0
	\end{cases}$$ unless $a^2+b^2\neq0$ and the characteristic of $F$ is not $2$. This statement remains valid in case that $a^2 + b^2 \neq 0$ and the characteristic of $F$ is not $2$, especially when $F$ is a Pythagorean field.
\end{lemma}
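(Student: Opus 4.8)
The plan is to read the two equations geometrically. The linear equation $ax_1+bx_2+cx_3=0$ carves out a plane $W\subseteq F^3$, and the question is whether the quadratic form $q(x_1,x_2,x_3)=x_1^2+x_2^2+x_3^2$ attains the value $1$ on $W$; equivalently, one must produce a vector of $q$-value $1$ inside the plane orthogonal, for the usual dot product, to $(a,b,c)$. I would first dispose of the degenerate case $(a,b,c)=(0,0,0)$, where $(x_1,x_2,x_3)=(1,0,0)$ already works, and likewise of the case in which one of $a,b,c$ vanishes, where whichever of $(1,0,0)$, $(0,1,0)$, $(0,0,1)$ sits in the vanishing slot solves the system; so from now on one may assume $a,b,c$ are all nonzero.

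The heart of the argument is the flagged case $\operatorname{char}F\neq 2$ and $a^2+b^2\neq 0$. Here the decisive observation is that the explicit vector $(b,-a,0)$ already lies in $W$, since $a\cdot b+b\cdot(-a)+c\cdot 0=0$, and its $q$-value is exactly $a^2+b^2$. Consequently, if $a^2+b^2$ possesses a square root $d$ in $F$, then $(b/d,-a/d,0)$ solves the system on the nose. When $F$ is Pythagorean the quantity $a^2+b^2$ is automatically a square, and it is nonzero by hypothesis, so $d\neq 0$ and the construction goes through. This is exactly the point at which the Pythagorean hypothesis enters, and it also accounts for the asymmetric role of $c$ in the condition $a^2+b^2\neq 0$: the witness $(b,-a,0)$ ignores $c$ altogether.

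It remains to settle the cases in which no square-root hypothesis is needed. When $\operatorname{char}F=2$ the Frobenius identity $x_1^2+x_2^2+x_3^2=(x_1+x_2+x_3)^2$ turns the quadratic equation into the linear condition $x_1+x_2+x_3=1$, so the whole system reduces to two linear equations which one solves directly. When $\operatorname{char}F\neq 2$ but $a^2+b^2=0$, the relation $a^2=-b^2$ forces $-1$ to be a square in $F$; eliminating one variable through the linear constraint then makes the leading square term cancel, leaving a single linear equation in the remaining variable, which is solved by choosing the free variable so that its coefficient is nonzero.

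The step I expect to be the main obstacle is precisely the flagged case $a^2+b^2\neq 0$ in characteristic other than $2$: the natural witness $(b,-a,0)$ has $q$-value $a^2+b^2$, which over an arbitrary field need not be a square, so one cannot rescale it to $q$-value $1$; this is the substance of the ``unless'' clause, and the Pythagorean condition is the clean way to remove the obstruction. A secondary point that demands care is the degenerate characteristic-$2$ configuration in which the functionals $(1,1,1)$ and $(a,b,c)$ are proportional with nonzero ratio, where the reduced linear system becomes inconsistent; the characteristic-$2$ assertion should be read with this alignment excluded.
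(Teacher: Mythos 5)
Your proposal is correct wherever the lemma itself is correct, and in those cases it is essentially the paper's own proof: the Pythagorean witness is identical (the paper takes $x_1=-b/d$, $x_2=a/d$, $x_3=0$ with $d^2=a^2+b^2$, i.e. your $(b,-a,0)$ rescaled), and your elimination in the isotropic case $\operatorname{char}F\neq 2$, $a^2+b^2=0$ lands exactly on the paper's explicit solution $x_1=-c/(2a)$, $x_2=-c/(2b)$, $x_3=1$. Your preliminary reduction disposing of the case where some coefficient vanishes simply replaces the paper's ``WLOG $a\neq0$'' normalization.

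The one genuine divergence is characteristic $2$, and there your treatment is the correct one. In its subcase $a^2+b^2=0$, $c\neq0$ (hence $a=b\neq0$), the paper's proof hinges on the assertion ``$a+c\neq0$'', which is never justified; when $a=b=c\neq0$ it fails, and then, exactly as you observe, the system is inconsistent: the linear equation forces $x_1+x_2+x_3=0$ while Frobenius turns the quadratic one into $(x_1+x_2+x_3)^2=1$. So the lemma as stated is false in this corner case, and your caveat excluding $(a,b,c)$ a nonzero multiple of $(1,1,1)$ is not an optional refinement but a necessary correction to both the statement and the paper's argument. (For what it is worth downstream, Lemma~\ref{s2-2} survives this corner case anyway: in characteristic $2$ the choice $(x_1,\ldots,x_6)=(1,1,0,0,1,1)$ realizes $(a,b,c)=(1,1,1)$; but it cannot be deduced from Lemma~\ref{s2-1} there, since every vector of the plane $x_1+x_2+x_3=0$ then satisfies $x_1^2+x_2^2+x_3^2=0$.)
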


\begin{proof}
	Let $a,b,c\in F$. If $a=b=c=0$, then we can choose $x_1=1$ and $x_2=x_3=0$. Otherwise, without loss of generality, we can assume $a\neq0$.	We divide the proof into the following cases:
	\medskip
	
	\noindent\textbf{Case 1.} $a^2+b^2=0$. Then, since $a$ is nonzero, $b$ must be nonzero.  If $c=0$, then we choose $x_1=\frac{-b}{a}$ and $x_2=x_3=1$. Now, let us examine the case when $c\neq0$. If the characteristic of $F$ is not $2$, then we can choose $x_1=\frac{-c}{2a}, x_2=\frac{-c}{2b},$ and $x_3=1$. Next, we consider the characteristic of $F$ being $2$. Within this subcase, since $a^2+b^2=0$, the sum $a+b$ of $a$ and $b$ must be $0$, that is $a=b$. If there exist $x_1,x_2,$ and $x_3$ in $F$ satisfying $x_1^2+x_2^2+x_3^2=1$ and $ax_1+bx_2+cx_3=0$, then $x_1+x_2+x_3=1$ and $a+c\neq0$ implying $x_3=\frac{a}{a+c}$ and $x_1+x_2=\frac{c}{a+c}$,  and thus we can choose $x_1=0$ and $x_2=\frac{c}{a+c}$.
	\medskip
	
	\noindent\textbf{Case 2.} $a^2+b^2\neq0$. If the characteristic of $F$ is $2$, then $a+b\neq0$ and we can choose $x_1=\frac{-b}{a+b}, x_2=\frac{a}{a+b},$ and $x_3=0$. Continuing ahead, we focus on the final subcase, where the characteristic of $F$ is not $2$. Moreover, within this subcase, we make an additional assumption that $F$ is a Pythagorean field. Then, there exists $d\in F$ such that $d^2=a^2+b^2$, so we can choose $x_1=\frac{-b}{d}, x_2=\frac{a}{d},$ and $x_3=0$.
\end{proof}

\begin{lemma}\label{s2-2}
	Let $F$ be a field. If $a,b,c$ belong to $F$, then there exist $x_1,x_2,x_3,x_4,x_5,x_6\in F$ such that
	$$\begin{cases}
		x_2x_6-x_3x_5=a,\\
		x_3x_4-x_1x_6=b,\\
		x_1x_5-x_2x_4=c,
	\end{cases}$$ except when $a^2+b^2\neq0$ and the characteristic of $F$ is not $2$. This statement remains valid in case that $a^2 + b^2 \neq 0$ and the characteristic of $F$ is not $2$, especially when $F$ is a Pythagorean field.
\end{lemma}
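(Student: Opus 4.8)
The plan is to recognize the three displayed equations as the single vector statement that $(a,b,c)$ is the \emph{cross product} of $(x_1,x_2,x_3)$ and $(x_4,x_5,x_6)$ in $F^3$. Writing $u=(x_1,x_2,x_3)$ and $v=(x_4,x_5,x_6)$ and using the usual cross product
\[
u\times v=(x_2x_6-x_3x_5,\ x_3x_4-x_1x_6,\ x_1x_5-x_2x_4),
\]
the system is precisely $u\times v=(a,b,c)$. Thus the task reduces to showing that every vector $(a,b,c)\in F^3$ is realizable as a cross product of two vectors, under exactly the stated hypotheses.

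First I would produce, by applying Lemma \ref{s2-1} to the same triple $a,b,c$, a vector $u=(x_1,x_2,x_3)$ with $x_1^2+x_2^2+x_3^2=1$ and $ax_1+bx_2+cx_3=0$; in the language of $F^3$ this says $u\cdot u=1$ and $u\cdot(a,b,c)=0$. The existence of such a $u$ is exactly the content of Lemma \ref{s2-1}, and its range of validity---including the exceptional regime $a^2+b^2\neq 0$, $\operatorname{char}F\neq 2$, which is resolved when $F$ is Pythagorean---is inherited verbatim, since the hypotheses on $a,b,c$ are identical in the two statements. I would then simply set
\[
v=(x_4,x_5,x_6):=(a,b,c)\times u,
\]
so that $x_4,x_5,x_6$ become the explicit quadratic expressions $bx_3-cx_2$, $cx_1-ax_3$, $ax_2-bx_1$ in $x_1,x_2,x_3$ and $a,b,c$.

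To verify this choice, I would invoke the vector triple product identity $u\times(w\times u)=(u\cdot u)\,w-(u\cdot w)\,u$ with $w=(a,b,c)$. Since $u\cdot u=1$ and $u\cdot(a,b,c)=0$, it collapses to
\[
u\times v=u\times\bigl((a,b,c)\times u\bigr)=(a,b,c),
\]
which is exactly the required system. The only point needing care is that this computation runs over an arbitrary field, including characteristic $2$; but the triple product identity is a polynomial identity with integer coefficients, so it holds over every commutative ring and no characteristic restriction enters at this step. Consequently the genuinely delicate input---the existence of the orthogonal unit vector $u$, with its single exceptional case and the Pythagorean rescue---has already been isolated in Lemma \ref{s2-1}, and the present lemma reduces to that one essentially without further obstruction. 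I expect the main thing to double-check is precisely that the exceptional and Pythagorean clauses transfer cleanly, which they do because the two lemmas impose the same conditions on $a,b,c$.
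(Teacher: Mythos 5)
Your proposal is correct and is essentially the paper's own proof: both apply Lemma~\ref{s2-1} to obtain $(x_1,x_2,x_3)$ with $x_1^2+x_2^2+x_3^2=1$ and $ax_1+bx_2+cx_3=0$, and both then take exactly the same $x_4=bx_3-cx_2$, $x_5=cx_1-ax_3$, $x_6=ax_2-bx_1$. The only difference is presentational --- the paper states this choice and leaves the verification as a direct calculation, while you package it as $v=(a,b,c)\times u$ and verify via the triple product identity $u\times(w\times u)=(u\cdot u)w-(u\cdot w)u$, correctly noting that this identity has integer coefficients and so holds in every characteristic.
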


\begin{proof}
	Let $a,b,c\in F$. Using Lemma~\ref{s2-1}, there exist $x_1,x_2,x_3\in F$ such that $x_1^2+x_2^2+x_3^2=1$ and $ax_1+bx_2+cx_3=0$ unless $a^2+b^2\neq0$ and the characteristic of $F$ is not $2$. In case that $a^2 + b^2 \neq 0$ and the characteristic of $F$ is not $2$, we need to add the assumption that $F$ is a Pythagorean field. The proof is completed by choosing $x_4=bx_3-x_2c, x_5=x_1c-ax_3,$ and $x_6=ax_2-x_1b$.
\end{proof}

We are now in a position to prove the following theorem for standard polynomials of degree $2$.

\begin{theorem}\label{s2}
	Let $\mathbb{H}_F$ be a quaternion algebra over a field $F$.
	\begin{enumerate}[\rm (i)]
		\item If the characteristic of $F$ is not $2$, then 
		$s_2(\mathbb{H}_F)$ is contained in $$\{\alpha i+\beta j+\gamma k\mid \alpha,\beta,\gamma\in F\}.$$ In particular, if $F$ is a Pythagorean field, then $$s_2(\mathbb{H}_F)=\{\alpha i+\beta j+\gamma k\mid \alpha,\beta,\gamma\in F\}=s_2(s_2(\mathbb{H}_F)).$$ Here $s_2(s_2(\mathbb{H}_F))=\{s_2(a,b)\mid a,b\in s_2(\mathbb{H}_F)\}$.
		\item If the characteristic of $F$ is $2$, then $$s_2(\mathbb{H}_F)=\{\alpha j^2+\beta j+\gamma k\mid \alpha,\beta,\gamma\in F\}=s_2(s_2(\mathbb{H}_F)).$$
	\end{enumerate} In particular,  $s_2(\mathbb{H}_F)$ is a vector space over $F$ in either of the following cases:
	\begin{enumerate}[\rm (1)]
	\item The characteristic of $F$ is not $2$ and $F$ is a Pythagorean field.
	\item The characteristic of $F$ is $2$.
\end{enumerate}
\end{theorem}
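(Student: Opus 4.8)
The plan is to exploit that $s_2(x_1,x_2)=x_1x_2-x_2x_1$ is the additive commutator, so that $s_2(\mathbb{H}_F)=\{[a,b]\mid a,b\in\mathbb{H}_F\}$. Since the commutator is $F$-bilinear and annihilates every central element, $[a,b]$ depends only on the components of $a,b$ along $i,j,k$; hence everything reduces to the three basis commutators. A direct computation from $k=ij=-ji$ gives, in characteristic not $2$,
$$[i,j]=2k,\quad [i,k]=2i^2j,\quad [j,k]=-2j^2i,$$
and from $k=ij=j(i+1)$ in characteristic $2$ it gives
$$[i,j]=j,\quad [i,k]=k,\quad [j,k]=j^2.$$
Expanding $[a,b]$ bilinearly in these relations yields at once the asserted containments: in the first case $s_2(\mathbb{H}_F)\subseteq\{\alpha i+\beta j+\gamma k\}$, and in the second $s_2(\mathbb{H}_F)\subseteq\{\alpha j^2+\beta j+\gamma k\}$ (recall $j^2\in F$). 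This disposes of the inclusion halves of (i) and (ii).

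For the reverse inclusions I would read off from the same bilinear expansion that, up to the fixed nonzero scalars $i^2,j^2$ and the factor $2$, the coefficient vector of $[a,b]$ is the cross product of the coefficient vectors $(a_1,a_2,a_3)$ and $(b_1,b_2,b_3)$ of the pure parts of $a$ and $b$. Consequently, realizing a prescribed target as a commutator amounts, after dividing out these scalars, to solving precisely the system
$$x_2x_6-x_3x_5=a,\quad x_3x_4-x_1x_6=b,\quad x_1x_5-x_2x_4=c$$
of Lemma~\ref{s2-2} for an appropriate right-hand side. Invoking Lemma~\ref{s2-2} then supplies the needed $x_1,\dots,x_6$: in characteristic $2$ the lemma applies with no restriction, while in characteristic not $2$ the only obstruction is the case $a^2+b^2\neq0$, which is exactly what the Pythagorean hypothesis removes. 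This gives surjectivity onto the full span, proving the equalities $s_2(\mathbb{H}_F)=\{\alpha i+\beta j+\gamma k\}$ (Pythagorean, characteristic $\neq2$) and $s_2(\mathbb{H}_F)=\{\alpha j^2+\beta j+\gamma k\}$ (characteristic $2$).

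To handle the iterated image $s_2(s_2(\mathbb{H}_F))$ I would re-run the computation with the inputs confined to $s_2(\mathbb{H}_F)$. When $\mathrm{char}\,F\neq2$ this is transparent, since $s_2(\mathbb{H}_F)=\{\alpha i+\beta j+\gamma k\}$ already contains $i,j,k$, so the coefficient vectors of its elements sweep out all of $F^3$ and Lemma~\ref{s2-2} again forces surjectivity onto $\{\alpha i+\beta j+\gamma k\}$. The ``vector space'' conclusion then follows for free: under hypothesis (1) or (2), $s_2(\mathbb{H}_F)$ has been identified with an explicit $F$-linear span, which is automatically closed under addition and scaling.

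The step I expect to be the main obstacle is twofold. First, the reduction to Lemma~\ref{s2-2} must be matched carefully with the Pythagorean hypothesis: one has to confirm that, after clearing the fixed scalars $i^2,j^2,2$, the exceptional locus $a^2+b^2\neq0$ of the lemma corresponds exactly to the configurations the Pythagorean condition repairs. Second, the iterated identity in characteristic $2$ demands separate bookkeeping, because there $[j,k]$ lands in the center while $i$ does not appear in $s_2(\mathbb{H}_F)=\{\alpha j^2+\beta j+\gamma k\}$; one must therefore track precisely which coefficients of a second commutator survive when both arguments are drawn from this image, and it is here that the computation is most delicate.
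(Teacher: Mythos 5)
Your containment and surjectivity arguments follow the same route as the paper's proof: expand $s_2(\alpha,\beta)$ bilinearly (central components drop out), observe that the resulting coefficients form a cross product, and invoke Lemma~\ref{s2-2}, with the Pythagorean hypothesis covering exactly the exceptional locus in characteristic $\neq 2$. You are in fact more careful than the paper about scalars: the paper's commutator table ($s_2(i,k)=-2j$, $s_2(j,k)=2i$) silently normalizes $i^2=j^2=-1$, whereas your formulas $s_2(i,k)=2i^2j$ and $s_2(j,k)=-2j^2i$ are the correct general ones, and your remark that clearing the fixed nonzero scalars $i^2,j^2,2$ does not affect solvability is precisely the needed observation. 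Your treatment of the iterated image in characteristic $\neq 2$ is also complete and matches the paper: in the Pythagorean case the witnesses $x_1i+x_2j+x_3k$ themselves lie in $s_2(\mathbb{H}_F)$, so the same evaluation shows $s_2(s_2(\mathbb{H}_F))=s_2(\mathbb{H}_F)$.

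However, for the second equality in (ii), namely $s_2(s_2(\mathbb{H}_F))=s_2(\mathbb{H}_F)$ in characteristic $2$, you only flag the difficulty and never carry it out, so your proposal has a genuine gap there. Worse, if you perform the bookkeeping you describe, the equality fails. In characteristic $2$ every element of $s_2(\mathbb{H}_F)=Fj^2+Fj+Fk$ has zero $i$-component, and $Fj^2=F$ is central, so for $u=A_1j^2+B_1j+C_1k$ and $v=A_2j^2+B_2j+C_2k$ one gets
$$s_2(u,v)=s_2(B_1j+C_1k,\,B_2j+C_2k)=(B_1C_2-C_1B_2)\,s_2(j,k)=(B_1C_2+C_1B_2)\,j^2\in F.$$
Hence $s_2(s_2(\mathbb{H}_F))=F$, a one-dimensional subspace, while $s_2(\mathbb{H}_F)$ is three-dimensional: the claimed equality is false, so no bookkeeping can rescue it. (This is equally a flaw in the paper: its proof of Case 2 ends with ``repeating the arguments in Case 1,'' but those arguments require witnesses $x_1i+x_2j+x_3k$ with $x_1\neq 0$, which do not belong to $s_2(\mathbb{H}_F)$ in characteristic $2$ --- exactly the obstruction you identified but left unresolved.) The remaining assertions are unaffected: the first equality in (ii) and the vector space conclusion still hold, since $Fj^2+Fj+Fk$ is visibly an $F$-subspace.
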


\begin{proof}
	Let $\alpha,\beta\in\mathbb{H}_F$, namely $\alpha=\alpha_1+\alpha_2 i+\alpha_3 j+\alpha_4$ and $\beta=\beta_1+\beta_2 i+\beta_3 j+\beta_4 k$ for some $\alpha_1,\alpha_2,\alpha_3,\alpha_4,\beta_1,\beta_2,\beta_3,\beta_4\in F$. Then, since the multilinearity  of $s_2$, direct calculation shows that \begin{eqnarray*}
		&& s_2(\alpha,\beta)\\
		&=& s_2(\alpha_1+\alpha_2 i+\alpha_3 j+\alpha_4 k,\beta_1+\beta_2 i+\beta_3 j+\beta_4 k)\\
		&=&\alpha_1\beta_1 s_2(1,1)+\alpha_1\beta_2 s_2(1,i)+\alpha_1\beta_3 s_2(1,j)+\alpha_1\beta_4 s_2(1,k)\\
		&+&\alpha_2\beta_1 s_2(i,1)+\alpha_2\beta_2p(i,i)+\alpha_2\beta_3 s_2(i,j)+\alpha_2\beta_4 s_2(i,k)\\
		&+&\alpha_3\beta_1 s_2(j,1)+\alpha_3\beta_2 s_2(j,i)+\alpha_3\beta_3 s_2(j,j)+\alpha_3\beta_4 s_2(j,k)\\
		&+&\alpha_4\beta_1 s_2(k,1)+\alpha_4\beta_2 s_2(k,i)+\alpha_4\beta_3 s_2(k,j)+\alpha_4\beta_4 s_2(k,k)\\
		&=&\alpha_2\beta_3 s_2(i,j)+\alpha_2\beta_4 s_2(i,k)+\alpha_3\beta_2 s_2(j,i)+\alpha_3\beta_4 s_2(j,k)\\
		&+&\alpha_4\beta_2 s_2(k,i)+\alpha_4\beta_3 s_2(k,j)
	\end{eqnarray*} We divide the proof into two cases as follows:
	\medskip
	
	\noindent\textbf{Case 1.} The characteristic of $F$ is not $2$. Observe that $ s_2(i,k)=-2j, s_2(k,i)=2j, s_2(j,i)=-2k, s_2(i,j)=2k, s_2(j,k)=2i, s_2(k,j)=-2i.$ Hence, $$ s_2(\alpha,\beta)=2(\alpha_3\beta_4-\alpha_4\beta_3)i+2(\alpha_4\beta_2-\alpha_2\beta_4)j+2(\alpha_2\beta_3-\alpha_3\beta_2)k.$$ This means that $s_2(\mathbb{H}_F)$ is contained in $\{\alpha i+\beta j+\gamma k\mid \alpha,\beta,\gamma\in F\}$. 
	\medskip
	
	Now, we add the assumption that $F$ is a Pythagorean field. Let $x=\alpha i+\beta j+\gamma k$ where $\alpha,\beta,\gamma\in F$. Using Lemma~\ref{s2-2}, there exist $x_1,x_2,x_3,x_4,x_5,x_6\in F$ such that
	$$\begin{cases}
		x_2x_6-x_3x_5=\frac{\alpha}{2},\\
		x_3x_4-x_1x_6=\frac{\beta}{2},\\
		x_1x_5-x_2x_4=\frac{\gamma}{2},
	\end{cases}$$ so $x=s_2\left(x_1i+x_2j+x_3k,x_4i+x_5j+x_6k\right)\in s_2(\mathbb{H}_F)$. Hence, $s_2(\mathbb{H}_F)$ contains $\{\alpha i+\beta j+\gamma k\mid \alpha,\beta,\gamma\in F\}$. Therefore, $$s_2(\mathbb{H}_F)=\{\alpha i+\beta j+\gamma k\mid \alpha,\beta,\gamma\in F\}.$$ On the other hand, it is obvious that $s_2(s_2(\mathbb{H}_F))$ is contained in $s_2(\mathbb{H}_F)$. From the above proof, if $x=\alpha i+\beta j+\gamma k\in s_2(\mathbb{H}_F)$ where $\alpha,\beta,\gamma\in F$, then $x=s_2\left(x_1i+x_2j+x_3k,x_4i+x_5j+x_6k\right)\in s_2(s_2(\mathbb{H}_F))$ for some $x_1,x_2,x_3,x_4,x_5,x_6\in F$. Hence, $s_2(\mathbb{H}_F)=s_2(s_2(\mathbb{H}_F))$.
	\medskip
	
	\noindent\textbf{Case 2.} The characteristic of $F$ is $2$. Similarly, observe that $ s_2(i,k)=k, s_2(k,i)=-k, s_2(j,i)=-j, s_2(i,j)=j, s_2(j,k)=-j^2, s_2(k,j)=j^2.$ Therefore, $$ s_2(\alpha,\beta)=(\alpha_4\beta_3-\alpha_3\beta_4)j^2+(\alpha_2\beta_3-\alpha_3\beta_2)j+(\alpha_2\beta_4-\alpha_4\beta_2)k.$$ We deduce that $\{\alpha j^2+\beta j+\gamma k\mid \alpha,\beta,\gamma\in F\}$ contains $s_2(\mathbb{H}_F)$. On the other hand, repeating the similar arguments in Case 1, if $x=\alpha j^2+\beta j+\gamma k$ where $\alpha,\beta,\gamma\in F$, then by Lemma~\ref{s2-2},  there exist $x_1,x_2,x_3,x_4,x_5,x_6\in F$ such that
	$$\begin{cases}
		x_2x_6-x_3x_5=-\alpha,\\
		x_3x_4-x_1x_6=-\gamma,\\
		x_1x_5-x_2x_4=\beta,
	\end{cases}$$ so $x=s_2(x_2i+x_2j+x_3k,x_4i+x_5j+x_6k)\in s_2(\mathbb{H}_F)$. This means that $s_2(\mathbb{H}_F)$ contains $\{\alpha j^2+\beta j+\gamma k\mid \alpha,\beta,\gamma\in F\}$. Therefore, we can conclude that $s_2(\mathbb{H}_F)=\{\alpha j^2+\beta j+\gamma k\mid \alpha,\beta,\gamma\in F\}$. Again, repeating the arguments in Case 1, we conclude that $s_2(\mathbb{H}_F)=s_2(s_2(\mathbb{H}_F))$.
\end{proof}

From the investigation of standard polynomials of degree $2$, we can easily extract the following non-trivial generalization for multilinear polynomials of degree $2$.

\begin{theorem}\label{2}
	Let $\mathbb{H}_F$ be a quaternion algebra over a field $F$ and let $p\in F\langle x_1,x_2\rangle$ be multilinear. 
	\begin{enumerate}[\rm (i)]
		\item If the characteristic of $F$ is not $2$, then $p(\mathbb{H}_F)$ is contained in $\{\alpha i+\beta j+\gamma k\mid \alpha,\beta,\gamma\in F\}$ unless either $p(\mathbb{H}_F)=\{0\}$ or $p(\mathbb{H}_F)=\mathbb{H}_F$. In particular, if $F$ is a Pythagorean field, then $$p(\mathbb{H}_F)\in\{\{0\}, \{\alpha i+\beta j+\gamma k\mid \alpha,\beta,\gamma\in F\}, \mathbb{H}_F\}.$$
		\item If the characteristic of $F$ is $2$, then $$p(\mathbb{H}_F)\in\{\{0\}, \{\alpha j^2+\beta j+\gamma k\mid \alpha,\beta,\gamma\in F\}, \mathbb{H}_F\}.$$
	\end{enumerate} In particular, $p(\mathbb{H}_F)$ is a vector space over $F$ in either of the following cases:
	\begin{enumerate}[\rm (1)]
	\item The characteristic of $F$ is not $2$ and $F$ is a Pythagorean field.
	\item The characteristic of $F$ is $2$.
	\end{enumerate}
\end{theorem}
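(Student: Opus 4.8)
The plan is to reduce everything to Theorem~\ref{s2} via the normal form of a degree-two multilinear polynomial. Since $p\in F\langle x_1,x_2\rangle$ is multilinear, it can be written as $p(x_1,x_2)=\lambda_1 x_1x_2+\lambda_2 x_2x_1$ for some $\lambda_1,\lambda_2\in F$. I would organize the whole argument around the single scalar $\lambda_1+\lambda_2$, which is exactly $\sum_{\sigma\in S_2}\lambda_\sigma$, and then split into the two cases dictated by the coefficient-sum dichotomy recorded earlier in the paper.

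First I would treat the case $\lambda_1+\lambda_2\neq 0$. Here I invoke the consequence of \cite[Lemma 3.3]{Pa_GaMe_22} already stated above: when the sum of the coefficients is nonzero, $p(\mathbb{H}_F)=\mathbb{H}_F$. This disposes of the ``$\mathbb{H}_F$'' alternative in both parts of the statement, in any characteristic. Next I would treat the case $\lambda_1+\lambda_2=0$, i.e.\ $\lambda_2=-\lambda_1$, so that $p=\lambda_1(x_1x_2-x_2x_1)=\lambda_1 s_2(x_1,x_2)$; note this identity persists in characteristic $2$, where $-\lambda_1=\lambda_1$ and $s_2=x_1x_2+x_2x_1$. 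If $\lambda_1=0$ then $p$ is the zero polynomial and $p(\mathbb{H}_F)=\{0\}$, while if $\lambda_1\neq 0$ then $p(\mathbb{H}_F)=\lambda_1\,s_2(\mathbb{H}_F)$.

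It remains to feed $\lambda_1\,s_2(\mathbb{H}_F)$ into Theorem~\ref{s2}. Because multiplication by the nonzero scalar $\lambda_1$ is a bijection of $F$, it fixes setwise each of the distinguished three-dimensional subspaces $\{\alpha i+\beta j+\gamma k\mid \alpha,\beta,\gamma\in F\}$ and $\{\alpha j^2+\beta j+\gamma k\mid\alpha,\beta,\gamma\in F\}$; hence scaling $s_2(\mathbb{H}_F)$ by $\lambda_1$ preserves both the containment and (under the Pythagorean hypothesis in characteristic $\neq 2$, or unconditionally in characteristic $2$) the equality furnished by Theorem~\ref{s2}. This yields the containment in part (i), the equality under the Pythagorean assumption, and the full description in part (ii). Collecting the three mutually exclusive outcomes $\{0\}$, the distinguished subspace, and $\mathbb{H}_F$ gives the ``in particular'' list, and since each of these three sets is visibly an $F$-subspace of $\mathbb{H}_F$, the concluding vector-space assertion follows at once.

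The step that carries the real weight is Theorem~\ref{s2} itself, which is already established; relative to it, the present argument is essentially a bookkeeping reduction. The only points requiring genuine care are checking that the cited coefficient-sum dichotomy applies verbatim to quaternion algebras (including the split case $\mathbb{H}_F\cong\mathrm{M}_2(F)$) and verifying the scaling invariance of the two distinguished subspaces. I do not anticipate either to be an obstacle, so I expect no hard part beyond correctly tracking the characteristic-$2$ identification of $s_2$ when passing from $p$ to $\lambda_1 s_2$.
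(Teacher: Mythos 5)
Your proof is correct and is essentially the argument the paper intends: the paper states this theorem without a separate proof, presenting it as an easy extraction from Theorem~\ref{s2}, and your reduction---writing $p=\lambda_1x_1x_2+\lambda_2x_2x_1$, invoking the coefficient-sum dichotomy from \cite[Lemma 3.3]{Pa_GaMe_22} (which indeed holds verbatim for any unital $F$-algebra, split or division, by substituting $1$ for one variable), and noting that $\lambda_1 s_2(\mathbb{H}_F)$ equals $s_2(\mathbb{H}_F)$ up to the scalar-invariance of the distinguished subspaces---is precisely that extraction. No gaps to report.
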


In case of multilinear polynomials of degree three and four, we also obtain a description as follows.

\begin{theorem}\label{34}
	Let $\mathbb{H}_F$ be a quaternion algebra over a field $F$ and let $p\in\langle X\rangle$ be multilinear. If The degree of $p$ is either three or four, then there are possible cases:
	\begin{enumerate}[\rm (i)]
		\item $p(\mathbb{H}_F)=\{0\}$.
		\item $p(\mathbb{H}_F)=\mathbb{H}_F$.
		\item If the characteristic of $F$ is not $2$, then $p(\mathbb{H}_F)$ is contained in $\{\alpha i+\beta j+\gamma k\mid \alpha,\beta,\gamma\in F\}$.
		\item If the characteristic of $F$ is $2$, then  $\{\alpha j^2+\beta j+\gamma k\mid \alpha,\beta,\gamma\in F\}$ contains $p(\mathbb{H}_F)$.
	\end{enumerate}
\end{theorem}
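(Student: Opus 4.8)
```latex
The plan is to reduce Theorem~\ref{34} to the structural facts already established, namely Theorem~\ref{s2} (the complete description of $s_2(\mathbb{H}_F)$) together with the two dichotomies recorded just before Lemma~\ref{s2-1}: if $\sum_\sigma\lambda_\sigma\neq0$ then $p(\mathbb{H}_F)=\mathbb{H}_F$, giving case (ii) immediately, while if $\sum_\sigma\lambda_\sigma=0$ then $p$ lies in the T-ideal $\langle s_2\rangle^T$ generated by the standard polynomial. So from the outset I would split on whether the sum of coefficients vanishes, dispose of the nonzero case, and devote the rest of the argument to polynomials inside $\langle s_2\rangle^T$.

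The heart of the matter is to show that membership in $\langle s_2\rangle^T$ forces $p(\mathbb{H}_F)$ into the prescribed subspace. The key observation is that every element of this T-ideal is, by definition, an $F$-linear combination of expressions built from $s_2$ and arbitrary monomials as coefficients; equivalently, using the explicit shapes recorded in the preceding Proposition for degrees three and four, each such $p$ is a linear combination of iterated standard polynomials and products of two standard polynomials, all of whose outermost operation is an $s_2$-bracket or a product of $s_2$-brackets. I would therefore argue that any evaluation $p(a_1,\dots,a_m)$ lands in the $F$-span of $s_2(\mathbb{H}_F)$ and products of two elements of $s_2(\mathbb{H}_F)$. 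By Theorem~\ref{s2}, $s_2(\mathbb{H}_F)$ is contained in $\{\alpha i+\beta j+\gamma k\}$ when $\mathrm{char}\,F\neq2$ and in $\{\alpha j^2+\beta j+\gamma k\}$ when $\mathrm{char}\,F=2$; in the degree-three forms $p=\lambda_1 s_2(x_1,s_2(x_3,x_2))+\lambda_2 s_2(x_3,s_2(x_1,x_2))$ the outer $s_2$ already places every value directly inside $s_2(\mathbb{H}_F)$, yielding cases (iii) and (iv) at once.

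For degree four the only new feature is the appearance of the product terms $s_2(x_a,x_b)s_2(x_c,x_d)$ listed in the Proposition. Here I would compute, for $u,v\in s_2(\mathbb{H}_F)$, where the product $uv$ lies: writing $u=\alpha_1 i+\beta_1 j+\gamma_1 k$ and $v=\alpha_2 i+\beta_2 j+\gamma_2 k$ (characteristic $\neq2$) and expanding using the multiplication table for $i,j,k$, the diagonal contributions $i^2,j^2,k^2$ are scalars and the cross terms reproduce multiples of $i,j,k$; the scalar part is exactly the obstruction one must track. The point is that in a product of two standard values the trace (scalar) component need not vanish, so a single product term could escape the pure subspace $\{\alpha i+\beta j+\gamma k\}$. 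I would therefore show that in the degree-four expression the product terms are paired so that their scalar parts cancel, or else that whenever they do not cancel one is back in the case $\sum_\sigma\lambda_\sigma\neq0$ handled earlier—this reconciliation between the two bookkeeping devices (the coefficient sum and the explicit normal form) is the delicate point.

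The main obstacle I anticipate is precisely controlling those scalar (reduced-trace) components of the product terms $s_2(x_a,x_b)s_2(x_c,x_d)$ and confirming that they are consistent with the coefficient-sum dichotomy, so that no degree-four multilinear $p$ produces values with a nonzero scalar part while simultaneously having $\sum_\sigma\lambda_\sigma=0$. Once that bookkeeping is settled, cases (iii) and (iv) follow by collecting the $i,j,k$ (resp. $j^2,j,k$) contributions and invoking Theorem~\ref{s2} termwise; the linearity of the span then confines $p(\mathbb{H}_F)$ to the stated subspace, completing the proof.
```
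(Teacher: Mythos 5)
You take exactly the route the paper itself presumably intends (note that Theorem~\ref{34} is stated in the paper \emph{without} proof): split on $\sum_\sigma\lambda_\sigma$, obtain case (ii) when the sum is nonzero, and otherwise feed the degree-three/four normal forms into Theorem~\ref{s2}. The degree-three half of this is correct, since there the normal form has an outermost $s_2$-bracket and all values land in the $F$-span of $s_2(\mathbb{H}_F)$. But the step you yourself flag as ``the delicate point'' --- controlling the scalar parts of the product terms $s_2(x_a,x_b)s_2(x_c,x_d)$ --- is not a bookkeeping issue: the dichotomy you hope for (``the scalar parts cancel, or else one is back in the case $\sum_\sigma\lambda_\sigma\neq0$'') is false, and in fact the theorem as stated is false with it. Take $h=s_2(x_1,x_2)s_2(x_3,x_4)+s_2(x_3,x_4)s_2(x_1,x_2)$, i.e.\ $\lambda_4=\lambda_9=1$ and all other coefficients zero in the degree-four normal form; this is multilinear of degree four with $\sum_\sigma\lambda_\sigma=0$. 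In characteristic $\neq2$ every trace-zero quaternion $w=\alpha i+\beta j+\gamma k$ satisfies $w^2=-a\alpha^2-b\beta^2-ab\gamma^2\in F$, so by polarization $uv+vu=(u+v)^2-u^2-v^2\in F$ for all trace-zero $u,v$; since values of $s_2$ are trace-zero by Theorem~\ref{s2}, we get $h(\mathbb{H}_F)\subseteq F$, while $h(i,j,i,j)=2(2k)^2=-8ab\neq0$ and the image is closed under $F$-scaling by multilinearity, whence $h(\mathbb{H}_F)=F$. Here the scalar parts \emph{add} rather than cancel, the coefficient sum is still $0$, and the image $F$ is neither $\{0\}$, nor $\mathbb{H}_F$, nor contained in $\{\alpha i+\beta j+\gamma k\mid \alpha,\beta,\gamma\in F\}$. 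So no argument can close your gap: $h$ is the classical central polynomial of $2\times2$ matrices, and Theorem~\ref{34} is simply missing the case $p(\mathbb{H}_F)=F$ which the paper's own Proposition~\ref{not division} and Theorem~\ref{division} rightly include (the first Proposition of Section~4, which is ``inferred directly'' from Theorem~\ref{34}, inherits the same defect).

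Even after adding that missing case, your outline cannot handle a lone product term. For $p=s_2(x_1,x_2)s_2(x_3,x_4)$ one has $\sum_\sigma\lambda_\sigma=0$, yet $p(i,j,i,j)=4k^2=-4ab$ is a nonzero scalar while $p(i,j,j,k)=(2k)(2bi)=4abj$ is not, so $p(\mathbb{H}_F)$ is not $\{0\}$, not inside $F$, and not inside $\{\alpha i+\beta j+\gamma k\mid \alpha,\beta,\gamma\in F\}$; the only admissible conclusion is $p(\mathbb{H}_F)=\mathbb{H}_F$, i.e.\ the Waring-type identity $s_2(\mathbb{H}_F)^2=\mathbb{H}_F$. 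That is a genuine surjectivity theorem, not a consequence of tracking scalar components: the paper establishes it only under extra hypotheses (Theorem~\ref{p1p2} for $F$ quadratically closed, or Pythagorean with $a=b=1$; Theorem~\ref{p1p22} for the non-division case), and in characteristic $2$ the same polynomial produces $jk=j^2(i+1)$, which lies outside $\{\alpha j^2+\beta j+\gamma k\mid \alpha,\beta,\gamma\in F\}$, while the needed equality $s_2(\mathbb{H}_F)^2=\mathbb{H}_F$ for division algebras of characteristic $2$ is precisely what Section~3 of the paper describes as difficult and leaves open. In short: your degree-three argument stands, but for degree four the statement itself needs correction and the proof requires surjectivity input that neither your outline nor the paper supplies.
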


On the other hand, from Theorem~\ref{s2}, it follows immediately to know images of multilinear polynomials defined as follows. Let $m=2^k$ for some a positive integer $k$. We define a polynomial $v_k(x_1,\ldots,x_{2^k})\in F\langle X\rangle$ successively as follows: set $v_1(x_1,x_2)=x_1x_2-x_2x_1$, assume that $v_{k-1}(x_1,\ldots,x_{2^{k-1}})$ is defined, and then put $$v_k(x_1,\ldots,x_{2^k})=v_1(v_{k-1}(x_1,\ldots,x_{2^{k-1}}),v_{k-1}(x_{2^{k-1}+1},\ldots,x_{2^k})).$$ This polynomial relates to the solvability of a lie algebra.

\begin{theorem}
	Let $\mathbb{H}_F$ be a quaternion algebra over a field $F$. For each positive integer $k$, the image of $v_k$ evaluated on $\mathbb{H}_F$ coincides with $s_2(\mathbb{H}_F)$ in cases of the following:
	\begin{enumerate}[\rm (i)]
		\item The characteristic of $F$ is not $2$ and $F$ is a Pythagorean field.
		\item The characteristic of $F$ is $2$.
	\end{enumerate} In particular, in both cases, $v_k(\mathbb{H}_F)$ is a vector space over $F$ and $$s_2(s_2(\mathbb{H}_F))=s_2(\mathbb{H}_F)=v_1(\mathbb{H}_F)=v_2(\mathbb{H}_F)=\cdots=v_k(\mathbb{H}_F)=\cdots.$$
\end{theorem}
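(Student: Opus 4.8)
The plan is to argue by induction on $k$, with the whole weight of the result carried by the self-reproducing identity $s_2(s_2(\mathbb{H}_F))=s_2(\mathbb{H}_F)$ already established in Theorem~\ref{s2}. The base case $k=1$ is immediate, since $v_1=s_2$ by definition, so $v_1(\mathbb{H}_F)=s_2(\mathbb{H}_F)$ with nothing to prove.

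For the inductive step, I would assume $v_{k-1}(\mathbb{H}_F)=s_2(\mathbb{H}_F)$ and compute $v_k(\mathbb{H}_F)$ directly from the recursive definition
\[
v_k(x_1,\ldots,x_{2^k})=v_1\bigl(v_{k-1}(x_1,\ldots,x_{2^{k-1}}),\,v_{k-1}(x_{2^{k-1}+1},\ldots,x_{2^k})\bigr).
\]
The key structural observation is that the two inner copies of $v_{k-1}$ act on \emph{disjoint} blocks of variables, so as all $2^k$ arguments range independently over $\mathbb{H}_F$, the first inner value and the second inner value can be assigned arbitrarily and independently within $v_{k-1}(\mathbb{H}_F)=s_2(\mathbb{H}_F)$. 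Hence the ordered pair of inner values sweeps out the entire Cartesian product $s_2(\mathbb{H}_F)\times s_2(\mathbb{H}_F)$, and therefore
\[
v_k(\mathbb{H}_F)=\{\,v_1(a,b)\mid a,b\in s_2(\mathbb{H}_F)\,\}=\{\,s_2(a,b)\mid a,b\in s_2(\mathbb{H}_F)\,\}=s_2(s_2(\mathbb{H}_F)).
\]

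To finish, I would invoke Theorem~\ref{s2}, which gives $s_2(s_2(\mathbb{H}_F))=s_2(\mathbb{H}_F)$ in both admissible settings (characteristic not $2$ with $F$ Pythagorean, and characteristic $2$). Combining this with the display above yields $v_k(\mathbb{H}_F)=s_2(\mathbb{H}_F)$, closing the induction and producing the asserted chain of equalities $s_2(s_2(\mathbb{H}_F))=s_2(\mathbb{H}_F)=v_1(\mathbb{H}_F)=v_2(\mathbb{H}_F)=\cdots$. Since Theorem~\ref{s2} also records that $s_2(\mathbb{H}_F)$ equals the explicit three-dimensional set $\{\alpha i+\beta j+\gamma k\}$ (resp. $\{\alpha j^2+\beta j+\gamma k\}$) and is thus a vector space over $F$, each $v_k(\mathbb{H}_F)$ inherits the vector-space structure automatically.

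I do not expect a genuine obstacle here, because the substantive work—surjectivity of $s_2$ onto the three-dimensional subspace and the fixed-point property—was already discharged in Theorem~\ref{s2}. The one point that must be handled with care, rather than waved through, is the independence claim in the inductive step: one must make explicit that disjointness of the two variable blocks is what upgrades ``each inner value lies in $s_2(\mathbb{H}_F)$'' to ``the pair of inner values covers all of $s_2(\mathbb{H}_F)\times s_2(\mathbb{H}_F)$,'' so that the image is exactly $s_2(s_2(\mathbb{H}_F))$ and not merely a subset of it.
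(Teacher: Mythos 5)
Your proof is correct and follows essentially the same route as the paper's: induction on $k$ with the fixed-point identity $s_2(s_2(\mathbb{H}_F))=s_2(\mathbb{H}_F)$ from Theorem~\ref{s2} doing all the real work. The only (cosmetic) difference is that you package both inclusions at once by noting that the disjoint variable blocks make the pair of inner $v_{k-1}$-values range over all of $s_2(\mathbb{H}_F)\times s_2(\mathbb{H}_F)$, whereas the paper explicitly exhibits $s_2(\mathbb{H}_F)\subseteq v_k(\mathbb{H}_F)$ and leaves the reverse containment (every $v_k$-value is an $s_2$-value) implicit.
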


\begin{proof}
	We prove theorem by induction on $k$. The statement is trivial in case $k=1$ by using Theorem~\ref{s2}. Assume that $k=2$. According to Theorem~\ref{s2}, it follows that $s_2(\mathbb{H}_F)=s_2(s_2(\mathbb{H}_F))$, so if $a\in s_2(\mathbb{H}_F)$, then \begin{eqnarray*}
		a&=&s_2(b_1,b_2)\\
		&=&s_2(s_2(a_1,a_2),s_2(a_3,a_4))\\
		&=&v_1(v_1(a_1,a_2),v_1(a_3,a_4))\\
		&=&v_2(a_1,a_2,a_3,a_4)\in v_2(\mathbb{H}_F)
	\end{eqnarray*} for some $b_1,b_2,a_1,a_2,a_3,a_4\in s_2(\mathbb{H}_F)$ in which $b_1=s_2(a_1,a_2)$ and $b_2=s_2(a_3,a_4)$. Hence, $v_2(\mathbb{H}_F)=v_1(\mathbb{H}_F)=s_2(\mathbb{H}_F)=s_2(s_2(\mathbb{H}_F))$. Assume that the statement is true for $k-1$. Now, by the induction hypothesis, if $a\in s_2(\mathbb{H}_F)$, then \begin{eqnarray*}
	a&=&s_2(b_1,b_2)\\
	&=&v_1(b_1,b_2)\\
	&=&v_1(v_{k-1}(a_1,\ldots,a_{2^{k-1}}),v_{k-1}(a_{2^{k-1}+1},\ldots,a_{2^k}))\\
	&=&v_k(a_1,\ldots,a_{2^k})\in v_k(\mathbb{H}_F)
	\end{eqnarray*} for some $b_1,b_2,a_1,\ldots,a_{2^k}\in s_2(\mathbb{H}_F)$ in which $b_1=v_{k-1}(a_1,\ldots,a_{2^{k-1}})$ and $b_2=v_{k-1}(a_{2^{k-1}+1},\ldots,a_{2^k})$. Therefore, $s_2(\mathbb{H}_F)=v_k(\mathbb{H}_F)$. The proof is completed.
\end{proof}

In the rest of this section, as mentioned, we will extend the work of \cite{Pa_Ma_21} to certain division quaternion algebras. Frequently, the quaternion algebra over a field $F$ of characteristic different from $2$ is denoted by $\mathbb{H}(a,b)_F$ when $i^2=-a$ and $j^2=-b$. Recall that  a quadratically closed field is a field in which every element has a square root. Note that every quadratically closed field is a Pythagorean field but not conversely, for example, the field of real numbers is Pythagorean but it is not a quadratically closed field. Let us see more in \cite{Bo_Ra_93}.

In this aspect, we first have the following plain but useful claim.

\begin{lemma} {\rm \cite[Proposition 2.8]{Pa_Fla_01}}\label{Flaut}
	Let $\mathbb{H}(a,b)_F$ be a division quaternion algebra over a field $F$ of characteristic different from $2$. If $\alpha=a_0+a_1i+a_2j+a_3k$ and $\beta=b_0+b_1i+b_2j+b_3k$ belong to $\mathbb{H}(a,b)_F$ where $a_0,a_1,a_2,a_3,b_0,b_1,b_2,b_3\in F$, the necessary and sufficient condition for the linear equation $ax=xb$ has nonzero solutions $x\in\mathbb{H}(a,b)_F$ is $a_0=b_0$ and $aa_1^2+ba_2^2+aba_3^2=ab_1^2+bb_2^2+abb_3^2$.
\end{lemma}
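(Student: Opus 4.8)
The plan is to recognize the equation $\alpha x = x\beta$ as asking whether $\alpha$ and $\beta$ are conjugate, and to reduce everything to the behaviour of the reduced trace $t(q)=q+\bar q$ and the reduced norm $n(q)=q\bar q$ on $\mathbb{H}(a,b)_F$. Recall that for $q=q_0+q_1 i+q_2 j+q_3 k$ one has $t(q)=2q_0$ and $n(q)=q_0^2+a q_1^2+b q_2^2+ab q_3^2$, and that the pure part $q'=q_1 i+q_2 j+q_3 k$ satisfies $q'^2=-n(q')=-(aq_1^2+bq_2^2+abq_3^2)$. Thus the two conditions in the statement are precisely $t(\alpha)=t(\beta)$, which gives $a_0=b_0$ since the characteristic is not $2$, together with $n(\alpha)=n(\beta)$, which, once $a_0=b_0$ is known, reduces exactly to the displayed equality of the two pure norms.

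First I would prove necessity. If $\alpha x=x\beta$ with $x\neq 0$, then $x$ is invertible because $\mathbb{H}(a,b)_F$ is a division ring, so $\beta=x^{-1}\alpha x$. Since conjugation is an $F$-algebra automorphism it preserves both $t$ and $n$; hence $t(\alpha)=t(\beta)$ and $n(\alpha)=n(\beta)$, which are exactly the two asserted identities.

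The substantive direction is sufficiency, and here the plan is an elementary operator argument in the constructive spirit of Lemmas~\ref{s2-1} and \ref{s2-2}. Writing $\alpha=a_0+\alpha'$ and $\beta=b_0+\beta'$ with pure parts $\alpha',\beta'$, the equality $a_0=b_0$ lets the scalar parts cancel, so $\alpha x=x\beta$ is equivalent to $\alpha' x=x\beta'$. Set $c:=n(\alpha')=n(\beta')$, which is the common value supplied by the second condition. On the four-dimensional $F$-space $\mathbb{H}(a,b)_F$ I would consider the commuting operators $L(x)=\alpha' x$ and $R(x)=x\beta'$. Because $\alpha'^2=\beta'^2=-c$, we obtain $L^2=R^2=-c\,\mathrm{Id}$, and since $L$ and $R$ commute,
$$(L-R)(L+R)=L^2-R^2=0.$$
If $\alpha'\neq 0$, then $L+R\neq 0$ (otherwise evaluating at $1$ forces $\beta'=-\alpha'$, and then $\alpha'$ would be central, impossible for a nonzero pure quaternion in characteristic $\neq 2$); choosing any $y$ with $x:=(L+R)(y)=\alpha' y+y\beta'\neq 0$ yields $(L-R)(x)=0$, that is $\alpha' x=x\beta'$, a nonzero solution. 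If instead $\alpha'=0$, then $c=0$ forces $\beta'=0$ as well, because the reduced norm is anisotropic on a division algebra, whence $\alpha=\beta$ and $x=1$ works.

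The main obstacle I anticipate is the sufficiency step, and within it the verification that the auxiliary operator $L+R$ is genuinely nonzero, since this is what guarantees an honest nonzero solution rather than the trivial one. This hinges on two facts special to the division case: a nonzero pure quaternion is never central when the characteristic is not $2$, and the reduced norm is anisotropic, so $n(\alpha')=0$ forces $\alpha'=0$. Once these are in place, the factorization $(L-R)(L+R)=0$ does all the work, and no field hypothesis beyond characteristic $\neq 2$ is required.
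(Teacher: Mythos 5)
Your proof is correct, but there is nothing in the paper to compare it against: the paper does not prove this lemma at all, it imports the statement verbatim with the citation to Flaut's Proposition 2.8. Your argument is therefore a welcome self-contained replacement, and it is the classical one. Necessity is the invariance of reduced trace and reduced norm under conjugation, using that a nonzero $x$ in a division ring is invertible and that characteristic $\neq 2$ turns $2a_0=2b_0$ into $a_0=b_0$; together with multiplicativity of the norm this yields exactly the two displayed conditions (note the statement's $ax=xb$ is a slight abuse of notation for $\alpha x=x\beta$, which you interpreted correctly). For sufficiency, your factorization $(L-R)(L+R)=L^2-R^2=0$ is valid because left and right multiplications always commute by associativity and $\alpha'^2=\beta'^2=-c$; this is precisely the standard trick that both elements are roots of the same quadratic $X^2-tX+n$, so that $x=\alpha y+y\beta-ty$ intertwines them, specialized to pure parts where $t=0$. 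The two delicate points are handled properly: if $L+R=0$ then evaluating at $1$ gives $\beta'=-\alpha'$, whence $\alpha'$ would be central, contradicting $Z(\mathbb{H}(a,b)_F)=F$ and the linear independence of $1,i,j,k$; and in the degenerate case $\alpha'=0$ you correctly invoke anisotropy of the norm form on a division algebra (if $n(q)=0$ and $q\neq 0$ then $q\bar{q}=0$ produces a zero divisor) to force $\beta'=0$. It is also worth observing where the division hypothesis genuinely enters: in necessity (invertibility of $x$) and in this degenerate case; the operator factorization itself requires nothing beyond characteristic $\neq 2$, which matches the fact that the conjugacy criterion fails in the split case, where rank-one intertwiners exist between non-conjugate elements.
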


With Lemma~\ref{Flaut} at hand, we obtain the useful result as follows.

\begin{lemma}\label{conjugate}
	Let $\mathbb{H}(a,b)_F$ be a division quaternion algebra over a field $F$ of characteristic different from $2$. If $\alpha=a_0+a_1i+a_2j+a_3k\in\mathbb{H}_F$ where $a_0,a_1,a_2,a_3\in F$, then $x^{-1}\alpha x=a_0+ri$ for some nonzero $x\in\mathbb{H}_F$ and $r\in F$ in either of the following cases:
	\begin{enumerate}[\rm (i)]
		\item $F$ is a quadratically closed field.
		\item $F$ is a Pythagorean field and $a=b=1$.
	\end{enumerate}
\end{lemma}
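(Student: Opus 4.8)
The plan is to conjugate $\alpha$ into the commutative subalgebra $F\cdot 1+F\cdot i$. Since scalars are central, for every nonzero $x$ one has $x^{-1}\alpha x=a_0+x^{-1}(a_1i+a_2j+a_3k)x$, so conjugation fixes the scalar part $a_0$, and it suffices to conjugate the pure part $v:=a_1i+a_2j+a_3k$ to an element of the form $ri$ with $r\in F$. The natural tool is Lemma~\ref{Flaut}: I would seek a nonzero $x$ solving the linear equation $vx=x(ri)$, since in the division ring $\mathbb{H}(a,b)_F$ this is equivalent to $x^{-1}vx=ri$.

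To apply Lemma~\ref{Flaut} I take its two quaternion inputs to be $v$ (scalar part $0$) and $ri$ (scalar part $0$, with $i$-coefficient $r$ and vanishing $j$- and $k$-coefficients). The scalar parts agree automatically, so the solvability criterion reduces to the single norm identity
$$a a_1^2+b a_2^2+ab a_3^2 = a r^2,$$
that is, to choosing $r\in F$ with $r^2=a_1^2+\tfrac{b}{a}a_2^2+b a_3^2$ (here $a,b\neq0$ because $i^2,j^2\neq0$). Thus the entire lemma collapses to a purely field-theoretic question: \emph{is the element $a_1^2+\tfrac{b}{a}a_2^2+b a_3^2$ of $F$ a square?}

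In case (i) this is immediate, since in a quadratically closed field every element admits a square root, so such an $r$ exists. In case (ii) one has $a=b=1$, and the quantity becomes the sum of three squares $a_1^2+a_2^2+a_3^2$; I would realize it as a square by invoking the Pythagorean property twice — first writing $a_1^2+a_2^2=s^2$ for some $s\in F$, then $s^2+a_3^2=r^2$ for some $r\in F$. In either case Lemma~\ref{Flaut} then yields a nonzero solution $x$ of $vx=x(ri)$; invertibility of $x$ gives $x^{-1}vx=ri$, whence $x^{-1}\alpha x=a_0+ri$, as required.

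The routine but essential bookkeeping is the norm-form computation underlying Lemma~\ref{Flaut} (in particular $k^2=-ab$ and $N(v)=aa_1^2+ba_2^2+aba_3^2$), which I would keep implicit by quoting the lemma directly. The only genuine step is matching the required square condition to the hypotheses on $F$, and the single point to watch is the three-squares reduction in the Pythagorean case. Note also that anisotropy of the norm form — equivalently, that $\mathbb{H}(a,b)_F$ is a division ring — forces $r\neq0$ whenever $v\neq0$, though this refinement is not needed for the statement itself.
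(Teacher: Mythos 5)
Your proposal is correct and follows essentially the same route as the paper: both reduce the problem via Lemma~\ref{Flaut} to finding $r\in F$ with $r^2=a_1^2+\tfrac{b}{a}a_2^2+ba_3^2$, settled by quadratic closure in case (i) and by applying the Pythagorean property twice in case (ii). Your splitting off of the central scalar part $a_0$ before invoking Lemma~\ref{Flaut} is a cosmetic variation (the paper applies the lemma directly to $\alpha$ and $a_0+ri$, whose scalar parts agree), and your explicit two-step sum-of-three-squares reduction just spells out what the paper leaves implicit.
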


\begin{proof}
	Let $\alpha=a_0+a_1i+a_2j+a_3k\in\mathbb{H}_F$ where $a_0,a_1,a_2,a_3\in F$. If $F$ is a quadratically closed field, there is $r\in F$ such that $r^2=a_1^2+\frac{b}{a}a_2^2+ba_3^2$. On the other hand, we also obtain a similar result if $F$ is a Pythagorean field and $a=b=1$. Using Lemma~\ref{Flaut}, there exists $x\in\mathbb{H}(a,b)_F$ is nonzero such that $x^{-1}\alpha x=a_0+ri$.
\end{proof}

On the other hand, we also need to know images of multilinear polynomials evaluated on basis quaternions $1,i,j,k$.

\begin{lemma}\label{basic_qua}
	Let $\mathbb{H}_F$ be a division quaternion algebra over a field $F$ and let $p\in F\langle X\rangle$ be multilinear. If $a_1,\ldots,a_m\in\{1,i,j,k\}$, then  $p(a_1,\ldots,a_m)=c\cdot q$ for some $c\in F$ and $q\in\{1,i,j,k\}$.
\end{lemma}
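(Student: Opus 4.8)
The plan is to exploit the multiplicative closure of the basis $\{1,i,j,k\}$ under quaternion multiplication. First I would observe that when $a_1,\ldots,a_m\in\{1,i,j,k\}$, every monomial $a_{\sigma(1)}\cdots a_{\sigma(m)}$ appearing in $p(a_1,\ldots,a_m)=\sum_{\sigma\in S_m}\lambda_\sigma a_{\sigma(1)}\cdots a_{\sigma(m)}$ is itself a product of basis elements. The key structural fact is that the set $\{\pm 1,\pm i,\pm j,\pm k\}$ (and, in the characteristic $2$ setting, the analogous finite set generated by the relations $i^2+i\in F$, $j^2\in F$, $k=ij$) is closed under multiplication up to a scalar from $F$: each product $q_1 q_2$ of two basis elements equals $c\cdot q$ for some $c\in F$ and $q\in\{1,i,j,k\}$. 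This follows directly from the defining relations $i^2\in F$, $j^2\in F$, $k=ij=-ji$ in characteristic not $2$, and from the parallel relations in characteristic $2$; it can be recorded as a small finite multiplication table.

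Next I would argue by induction on $m$ (the length of the monomial) that any product $a_{\sigma(1)}\cdots a_{\sigma(m)}$ of basis elements collapses to $c_\sigma\cdot q_\sigma$ with $c_\sigma\in F$ and $q_\sigma\in\{1,i,j,k\}$: multiplying the accumulated value $c\cdot q$ on the right by the next basis element $a_{\sigma(\ell)}$ produces $c\cdot(q\,a_{\sigma(\ell)})$, and by the multiplication table $q\,a_{\sigma(\ell)}=c'\cdot q'$, so the running product stays of the desired form. Thus each individual monomial of $p$ evaluates to a scalar multiple of a single basis quaternion.

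The main obstacle — and the crux of the argument — is that summing these monomials could in principle spread the value across several of the basis directions $1,i,j,k$ and thereby fail to be of the form $c\cdot q$. To rule this out I would appeal to a homogeneity/parity invariant: because the inputs $a_1,\ldots,a_m$ are a fixed multiset of basis elements, every permutation $\sigma$ rearranges the \emph{same} collection of factors, so each monomial $a_{\sigma(1)}\cdots a_{\sigma(m)}$ has the same "total type'' determined by how many $i$'s, $j$'s, and $k$'s occur among the inputs (together with the fixed number of sign/scalar contributions forced by the relations). Consequently all monomials land on the \emph{same} basis element $q_\sigma=q$, differing only by the scalars $c_\sigma\in F$, and the full sum becomes $\bigl(\sum_\sigma \lambda_\sigma c_\sigma\bigr)\cdot q=c\cdot q$ with $c\in F$. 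Here the division-ring hypothesis guarantees the basis behaves uniformly, and the argument must be carried out separately for the characteristic not $2$ and characteristic $2$ cases, since the multiplication tables differ, but the reasoning is identical in structure.

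I expect the verification that all monomials share a common basis direction $q$ to be the delicate point: one must make precise the invariant that distinguishes the four basis directions and check it is preserved under permuting the factors. Once this invariance is established the conclusion is immediate, so I would state the multiplication-table lemma and the parity invariant carefully and then let the sum collapse.
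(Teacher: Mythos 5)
Your proposal is essentially the paper's own argument. The paper proves by induction on $m$ that $a_{\sigma(1)}\cdots a_{\sigma(m)}=\lambda\, a_1a_2\cdots a_m$ for every $\sigma\in S_m$, using the fact that two basis quaternions commute up to a scalar ($a_2a_1=\lambda a_1a_2$); this single statement packages your two steps --- collapsing each monomial to $c_\sigma\cdot q_\sigma$ via the multiplication table, and showing the direction $q_\sigma$ is the same for all $\sigma$. The step you flag as delicate is in fact immediate in characteristic $\neq 2$: modulo scalars, $\{1,i,j,k\}$ is an abelian (Klein four) group, so the basis direction of a product depends only on the multiset of factors, exactly as you guessed. (Also, the division hypothesis plays no role here in characteristic $\neq 2$; the table holds in any quaternion algebra.)

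One concrete correction: your claim that characteristic $2$ works ``identically in structure'' is false, and this is a genuine gap --- though one shared by the paper's proof. In characteristic $2$ the defining relations give $i^2=i+c$ with $c=i^2+i\in F$, and $c\neq 0$ in a division algebra (else $i(i+1)=0$ forces $i\in\{0,1\}$); so $i^2$ is \emph{not} a scalar multiple of a single basis element, and likewise $ji=ij+j$ is not a scalar multiple of $ij$. There is no multiplication table ``up to scalars,'' and indeed the lemma itself fails there: take $p=x_1x_2$ and $a_1=a_2=i$, so that $p(i,i)=c+i$ has two nonzero coordinates. The paper's inductive claim $a_2a_1=\lambda a_1a_2$ breaks at the same point, but the paper only ever invokes this lemma under the hypothesis that the characteristic of $F$ is not $2$, where both your argument and the paper's are sound.
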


\begin{proof}
	Let $a_1,a_2,\ldots,a_m\in\{1,i,j,k\}$. If $m=2$, then $a_2a_1=\lambda a_1a_2$ for some $\lambda\in F$. Proceeding by induction on $m$, we can conclude that there exists $\lambda\in F$ such that $a_{\sigma(1)}a_{\sigma(2)}\cdots a_{\sigma(m)}=\lambda a_1a_2\cdots a_m$ for every $\sigma\in S_m$. Therefore, $p(a_1,\ldots,a_m)=c\cdot q$ for some $c\in F$ and $q\in\{1,i,j,k\}$.
\end{proof}

We are now to establish the following useful lemma needed for the establishment of the final main result in this section.

\begin{lemma}\label{s2 contain}
	Let $\mathbb{H}(a,b)_F$ be a division quaternion algebra over a field $F$ of characteristic different from $2$ and let $p\in F\langle X\rangle$ be multilinear and non-central. Then, $s_2(\mathbb{H}_F)$ is contained in $p(\mathbb{H}_F)$ in either of the following cases:
	\begin{enumerate}[\rm (i)]
		\item $F$ is a quadratically closed field.
		\item $F$ is a Pythagorean field and $a=b=1$.
	\end{enumerate}
\end{lemma}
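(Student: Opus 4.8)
The plan is to show that $p(\mathbb{H}_F)$ contains $s_2(\mathbb{H}_F)$ by exhibiting, for each element of $s_2(\mathbb{H}_F)$, an evaluation of $p$ that produces it. By Theorem~\ref{s2}(i), since the characteristic is not $2$, we have $s_2(\mathbb{H}_F) = \{\alpha i + \beta j + \gamma k \mid \alpha,\beta,\gamma \in F\}$ in both cases (i) and (ii), because case (i) is quadratically closed hence Pythagorean, and case (ii) is Pythagorean by hypothesis. So the target set is the space of trace-zero (pure) quaternions, and it suffices to show every pure quaternion lies in $p(\mathbb{H}_F)$. The natural first reduction is to use that $p$ is non-central: there exist basis elements $a_1,\dots,a_m \in \{1,i,j,k\}$ with $p(a_1,\dots,a_m)$ not a scalar, since otherwise multilinearity and Lemma~\ref{basic_qua} would force $p$ to be central. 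By Lemma~\ref{basic_qua}, $p(a_1,\dots,a_m) = c\cdot q$ with $c \in F^\times$ and $q \in \{i,j,k\}$ (the case $q=1$ is excluded by non-centrality). Thus we obtain at least one nonzero pure quaternion, say a scalar multiple of $i$, inside $p(\mathbb{H}_F)$.

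The key step is then to upgrade this single element to \emph{all} pure quaternions using a conjugation/scaling argument. First I would scale inputs: since $p$ is multilinear of degree $m$, replacing $a_t$ by $\lambda a_t$ multiplies the value by $\lambda$, so $c\cdot q \in p(\mathbb{H}_F)$ gives $\lambda c \cdot q \in p(\mathbb{H}_F)$ for every $\lambda \in F$; hence the whole line $Fq \subseteq p(\mathbb{H}_F)$. Next I would conjugate: for any nonzero $x \in \mathbb{H}_F$, because $p$ is a sum of monomials $x_{\sigma(1)}\cdots x_{\sigma(m)}$, we have the equivariance $x^{-1} p(a_1,\dots,a_m) x = p(x^{-1}a_1 x, \dots, x^{-1}a_m x)$, so $p(\mathbb{H}_F)$ is closed under conjugation. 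Therefore $x^{-1}(Fq)x \subseteq p(\mathbb{H}_F)$ for all $x$, i.e. $p(\mathbb{H}_F)$ contains the entire conjugacy-closure of the line $Fi$ (after relabeling $q=i$). This is where Lemma~\ref{conjugate} enters in reverse: it shows every pure quaternion of the appropriate norm is conjugate to an element $a_0 + ri$; applied with $a_0=0$, it tells us that conjugates of $Fi$ sweep out pure quaternions whose norm form $r^2 = \alpha^2 + \tfrac{b}{a}\beta^2 + b\gamma^2$ is realized, which in the quadratically closed or Pythagorean-with-$a=b=1$ cases is every element, hence every pure quaternion.

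The main obstacle I anticipate is controlling exactly which pure quaternions arise as conjugates of the line $Fi$, and confirming the sweep is surjective onto all of $\{\alpha i + \beta j + \gamma k\}$. Conjugation preserves the reduced norm, so a conjugate of $ri$ has the same norm as $ri$; but since we have the full scalar line $Fi$ available \emph{before} conjugating, we can first adjust $r$ to hit any desired norm value, and only then conjugate. The content of Lemma~\ref{conjugate} is precisely that in cases (i) and (ii) the relevant square roots exist, so every pure quaternion $\alpha i + \beta j + \gamma k$ with norm $\alpha^2 a + \beta^2 b + \gamma^2 ab$ (up to the sign conventions in Lemma~\ref{Flaut}) equals $x^{-1}(ri)x$ for a suitable $r$ with $r^2$ matching that norm and a suitable $x$. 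Combining scaling (to set the norm) with conjugation (to set the direction) should cover the whole space. I would close by assembling these: start from $c q \in p(\mathbb{H}_F)$, scale to get $Fi \subseteq p(\mathbb{H}_F)$, conjugate to reach each pure quaternion via Lemma~\ref{conjugate}, and conclude $s_2(\mathbb{H}_F) \subseteq p(\mathbb{H}_F)$. The delicate point to verify carefully is the direction of Lemma~\ref{conjugate}: it is stated as ``$\alpha$ is conjugate to $a_0 + ri$,'' and I must use its converse content — that sufficiently many targets $\alpha i + \beta j + \gamma k$ are themselves conjugate into the available line — which follows from the same computation underlying Lemma~\ref{Flaut} applied symmetrically.
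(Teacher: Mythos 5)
Your proposal is correct and takes essentially the same route as the paper's proof: both extract a nonzero pure element $c\cdot q$ of $p(\mathbb{H}_F)$ from Lemma~\ref{basic_qua} plus non-centrality, then combine multilinear scaling with conjugation-equivariance of $p$ and Lemma~\ref{conjugate} to carry the line $Fi$ onto every element of $s_2(\mathbb{H}_F)$. Your final worry is unfounded: no ``converse'' of Lemma~\ref{conjugate} is needed, since the lemma applied with $a_0=0$ already states that every pure quaternion is a conjugate of an element of $Fi$, which is exactly the direction your sweep argument (and the paper's proof) uses.
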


\begin{proof}
	Let $x\in s_2(\mathbb{H}_F)$. Using Theorem~\ref{s2} and  Lemma~\ref{conjugate}, $g^{-1}xg=ri$ for some $g\in\mathbb{H}_F$ and $r\in F$. According to Lemma~\ref{basic_qua} and the fact that $p$ is non-central, there exists $y\in p(\mathbb{H}_F)$ which is of the form: $r'\cdot x'$ where $x'\in\{i,j,k\}$ and $r'\in F$ is nonzero. Applying Lemma~\ref{conjugate} again, without loss of generality, we can assume $x'=i$. On the other hand, let us write $y=p(z_1,z_2,\ldots,z_m)$ for some $z_1,z_2,\ldots,z_m\in\mathbb{H}_F$, which implies that $$x=p(grr'^{-1}z_1g^{-1},gz_2g^{-1},\ldots,gz_mg^{-1})\in p(\mathbb{H}_F).$$  Therefore, $s_2(\mathbb{H}_F)$ is contained in $p(\mathbb{H}_F)$.
\end{proof}

We can now summarize all of the above lemmas and  show the validity of the following statement. The idea of the proof is similar in \cite[Theorem 1]{Pa_Ma_21}.

\begin{theorem}\label{division}
	Let $\mathbb{H}(a,b)_F$ be a division quaternion algebra over a field $F$ of characteristic different from $2$ and let $p\in F\langle X\rangle$ be multilinear. Then, $p(\mathbb{H}_F)\in\{\{0\}, F, s_2(\mathbb{H}_F), \mathbb{H}_F\}$ in either of the following cases:
	\begin{enumerate}[\rm (i)]
		\item $F$ is a quadratically closed field.
		\item $F$ is a Pythagorean field and $a=b=1$.
	\end{enumerate}
\end{theorem}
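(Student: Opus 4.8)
The plan is to reduce the classification to the already-established facts about the standard polynomial $s_2$ and the associated trace (norm) structure on $\mathbb{H}(a,b)_F$, treating the central and non-central cases separately. First I would recall from the discussion preceding Theorem~\ref{s2} that a multilinear $p$ either has $\sum_\sigma \lambda_\sigma \neq 0$ or lies in the T-ideal generated by $s_2$. If $\sum_\sigma \lambda_\sigma \neq 0$, then evaluating $p$ with $x_1=\cdots=x_m=1$ (and rescaling) already shows $p(\mathbb{H}_F)$ contains a nonzero central element, and more carefully one sees $p(\mathbb{H}_F)=\mathbb{H}_F$ in this case, which disposes of one alternative. So the substantive work is the complementary case $p\in\langle s_2\rangle^T$, where every evaluation has reduced trace zero, hence $p(\mathbb{H}_F)\subseteq s_2(\mathbb{H}_F)=\{\alpha i+\beta j+\gamma k\}$ by Theorem~\ref{s2}(i).

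Within that case I would split according to whether $p$ is central or non-central. If $p$ is central, then every evaluation $p(a_1,\dots,a_m)$ is a scalar; since it also has trace zero it must be $0$, so $p(\mathbb{H}_F)=\{0\}$. If $p$ is non-central, then Lemma~\ref{s2 contain} applies directly: under either hypothesis (i) or (ii) we get the reverse inclusion $s_2(\mathbb{H}_F)\subseteq p(\mathbb{H}_F)$, and combined with $p(\mathbb{H}_F)\subseteq s_2(\mathbb{H}_F)$ this yields the exact equality $p(\mathbb{H}_F)=s_2(\mathbb{H}_F)$. Thus the only surviving possibilities are $\{0\}$, $F$, $s_2(\mathbb{H}_F)$, and $\mathbb{H}_F$, which is precisely the claimed list.

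The delicate point is verifying that the trichotomy ``$\sum_\sigma\lambda_\sigma\neq 0$ forces $p(\mathbb{H}_F)=\mathbb{H}_F$'' really holds over a division quaternion algebra, since the cited Lemma~3.3 of \cite{Pa_GaMe_22} and the surrounding remarks are phrased for the generic/surjectivity statement, and one must be sure the image is all of $\mathbb{H}_F$ rather than merely something containing $F$. Here I would argue that when $p\notin\langle s_2\rangle^T$ one can write $p=\mu\, x_1\cdots x_m + p_0$ modulo reordering where the total scalar contribution is nonzero, and use that a nonzero central value together with the non-central directions already produced by $s_2$-type substitutions (via Lemma~\ref{s2 contain} applied to the $s_2$-component, i.e.\ $p$ minus its central part) spans $\mathbb{H}_F$; concretely, adding a suitable central evaluation to elements of $s_2(\mathbb{H}_F)$ sweeps out the whole algebra. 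This ``central part plus trace-zero part fills everything'' step is where the main obstacle lies, because it requires that the non-central component of $p$ not be identically central on $\mathbb{H}_F$, which is exactly where the field hypotheses (quadratic closure, or Pythagorean with $a=b=1$) enter through Lemma~\ref{conjugate} and Lemma~\ref{s2 contain}.

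Finally I would remark that the four cases are genuinely exhaustive and mutually consistent: the decomposition of $p$ into its central (scalar-coefficient-sum) part and its $s_2$-part is the organizing principle, and each branch lands in exactly one member of $\{\{0\},F,s_2(\mathbb{H}_F),\mathbb{H}_F\}$. The role of hypotheses (i) and (ii) is solely to guarantee the conjugation-to-standard-form statement of Lemma~\ref{conjugate}, which in turn powers Lemma~\ref{s2 contain}; without it one would only retain the inclusions $p(\mathbb{H}_F)\subseteq s_2(\mathbb{H}_F)$ and lose the sharp equality.
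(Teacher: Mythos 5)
Your proposal contains a fatal gap at its organizing step: the claim that $p\in\langle s_2(x_1,x_2)\rangle^T$ forces every evaluation of $p$ on $\mathbb{H}_F$ to have reduced trace zero, hence $p(\mathbb{H}_F)\subseteq s_2(\mathbb{H}_F)$, is false. Membership in the T-ideal generated by $s_2$ only means $p$ is a sum of terms of the form $u\,s_2(f,g)\,v$, and such products are not trace vanishing. Concretely, take $p=s_2(x_1,x_2)s_2(x_3,x_4)$: it is multilinear with $\sum_\sigma\lambda_\sigma=0$, yet $p(i,j,i,j)=[i,j]^2=4k^2=-4ab$ is a nonzero element of $F$ (nonzero since $\mathbb{H}(a,b)_F$ is division and $\mathrm{char}\,F\neq 2$), while $p(i,j,i,k)=[i,j][i,k]=4ab\,i\notin F$, so $p$ is non-central. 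Your Branch B would conclude $p(\mathbb{H}_F)=s_2(\mathbb{H}_F)$ for this $p$, which is wrong: its image is $s_2(\mathbb{H}_F)^2$, which contains nonzero scalars and in fact equals $\mathbb{H}_F$ by the paper's Theorem~\ref{p1p2}. The same false premise also makes your Branch B eliminate the value $F$ from the classification: the multilinear polynomial $s_2(x_1,x_2)s_2(x_3,x_4)+s_2(x_3,x_4)s_2(x_1,x_2)$ has coefficient sum $0$ and is central with nonzero image (e.g.\ it takes the value $8k^2=-8ab$), yet your argument ``central and trace zero, hence $0$'' would force its image to be $\{0\}$.

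What your proposal is missing is precisely the hard case that the paper isolates: when $p(\mathbb{H}_F)$ contains both a nonzero scalar and a non-scalar element, one must prove $p(\mathbb{H}_F)=\mathbb{H}_F$. The paper gets there without the T-ideal dichotomy: Lemma~\ref{basic_qua} splits the proof into the cases $p(\mathbb{H}_F)=\{0\}$, $p(\mathbb{H}_F)\subseteq F$, $p(\mathbb{H}_F)\subseteq s_2(\mathbb{H}_F)$ (where, as in your proposal, Lemma~\ref{s2 contain} upgrades the inclusion to equality), and the remaining case. In that remaining case it runs a chain argument on the partially evaluated polynomials $A_i=p(x_1,\ldots,x_{i-1},a_i,\ldots,a_m)$ to produce a nonzero scalar value $\alpha=p(r_1,\ldots,r_m)$ and a non-scalar value $\beta=p(r_1,\ldots,r_i^*,\ldots,r_m)$ that \emph{differ in a single argument}; only then does multilinearity in that one slot allow the scalar part and the trace-zero part of an arbitrary target (normalized to $a+ri$ via Lemma~\ref{conjugate}) to be realized by a single evaluation, namely by plugging $a\alpha^{-1}r_i+rr'^{-1}\bigl(r_i^*-a'\alpha^{-1}r_i\bigr)$ into the $i$-th argument and conjugating. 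Your substitute for this step --- ``adding a suitable central evaluation to elements of $s_2(\mathbb{H}_F)$ sweeps out the whole algebra'' --- is not a proof, because images of polynomials are not closed under addition; that is the very point at issue in L'vov--Kaplansky-type problems, and sums of values of $p$ are again values of $p$ only when the summands differ in one argument. (By contrast, your Branch A needs no machinery at all: $p(x,1,\ldots,1)=\bigl(\sum_\sigma\lambda_\sigma\bigr)x$, so a nonzero coefficient sum gives $p(\mathbb{H}_F)=\mathbb{H}_F$ immediately.)
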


\begin{proof}
	Lemma~\ref{basic_qua} yields that there are four possible cases:
	\begin{description}
		\item[Case 1] $p(\mathbb{H}_F)=\{0\}$.
		\item[Case 2] $p(\mathbb{H}_F)$ is contained in $F$.
		\item[Case 3] $p(\mathbb{H}_F)$ is contained in $s_2(\mathbb{H}_F)$.
		\item[Case 4] $p(\mathbb{H}_F)$ do not belong to $\{\{0\}, F,s_2(\mathbb{H}_F)\}$.
	\end{description} 
	
	Regarding Case 2, we immediately deduce that $p(\mathbb{H}_F)=F$. Using Lemma~\ref{s2 contain}, we conclude that $p(\mathbb{H}_F)=s_2(\mathbb{H}_F)$. Now, let us consider Case 4, and then, there exist $a_1,\ldots,a_m\in\mathbb{H}_F$ such that $p(a_1,\ldots,a_m)\in F$ which is nonzero. Put $$A_1=p(a_1,a_2,\ldots,a_m)$$ which can be view as  a constant polynomial taking only one possible value
	which is a nonzero and for each $i\in\{2,\ldots,m+1\}$, let $$A_i=p(x_1,\ldots,x_{i-1},a_i,\ldots,a_m)$$ be a polynomial in $F\langle x_1,\ldots,x_{i-1}\rangle$. Then, $A_i(\mathbb{H}_F)$ is contained in $A_{i+1}(\mathbb{H}_F)$ for all $i\in\{1,\ldots,m+1\}$ and $A_{m+1}(\mathbb{H}_F)=p(\mathbb{H}_F)$. Hence, there exists $i\in\{1,\ldots,m+1\}$ such that $A_i(\mathbb{H}_F)$ is contained in $F$ and $A_{i+1}(\mathbb{H}_F)$ is not contained in $F$, so there exist $r_1,r_2,\ldots,r_m,r_i^*\in\mathbb{H}_F$ satisfying $p(r_1,\ldots,r_m)\in F$ which is nonzero and $$p(r_1,\ldots,r_{i-1},r_i^*,r_{i+1},\ldots,r_m)$$ which is not in $F$. Let us set $\alpha=p(r_1,\ldots,r_m)$ and $$\beta=p(r_1,\ldots,r_{i-1},r_i^*,r_{i+1},\ldots,r_m).$$ Using Lemma~\ref{conjugate}, $g^{-1}\beta g=a'+r'i$ for some $a'\in F$ and $r'\in F$. Since $\beta$ is not in $F$, the element $r'$ must be nonzero.  Now, let $x\in\mathbb{H}_F$. By using Lemma~\ref{conjugate}, $hxh^{-1}=a+ri$ for some $r\in F$ and $h\in\mathbb{H}_F$ which is nonzero. If $a=0$, then Lemma~\ref{s2 contain} yields that $x\in s_2(\mathbb{H}_F)$ is contained in $p(\mathbb{H}_F)$. Now, we assume $a\neq0$. By putting $r^{**}_i=r^*_i-a'\alpha^{-1}r_i$, direct calculation shows that $$a=g^{-1}ag=p(g^{-1}r_1g,\ldots,g^{-1}r_{i-1}g,g^{-1}a\alpha^{-1}r_ig,g^{-1}r_{i+1}g,\ldots,g^{-1}r_mg)$$ and	$$ri=p(rg^{-1}r_1g,\ldots,rg^{-1}r_{i-1}g,rr'^{-1}g^{-1}r^{**}_ig,rg^{-1}r_{i+1}g,\ldots,rg^{-1}r_mg).$$By putting $$r_i^{***}=h^{-1}(g^{-1}a\alpha^{-1}r_ig+rr'^{-1}g^{-1}r_i^{**}g)h$$ and $r_j^*=h^{-1}rg^{-1}r_jgh$ for all  $j\in\{1,\ldots,m\}$ and $j\neq i$, we can deduce that $$x=p(r_1^*,\ldots,r_{i-1}^*,r_i^{***},r_{i+1}^*,\ldots,r_m^*)\in p(\mathbb{H}_F).$$ Therefore, within Case 4, we conclude that $p(\mathbb{H}_F)=\mathbb{H}_F$.
\end{proof}

Let us see Proposition~\ref{not division} in case of quaternion algebras which are not division rings.

\section{Products of images of multilinear polynomials}

The classical Waring's problem, which is proposed by Edward Waring in 1770 and solved by David Hilbert in 1901, asks whether for every positive integer $k$, there exists a positive integer $g(k)$ such that every positive integer can be expressed as a sum of $g(k)$ $k$th powers of nonnegative integers. Various extensions and variations of this problem have been studied (see \cite{Pa_Fo_02,Pa_Hel_92,Pa_Ka_19,Pa_LaShTi_11,Pa_Sha_09}). In 2020, Bresar initiated the study of various Waring's problems for matrix algebras (see \cite{Pa_Bre_20}), which is recently developed by Bresar and Semrl in \cite{Pa_BreSe_23I,Pa_BreSe_23II}.

In this section, we focus on Waring's problems which asks whether every element of a quaternion algebra can be written as a product of images of multilinear polynomials, which is initiated partially by \cite{Pa_DaDuSo_23,Pa_DuSo}. Recall that a polynomial $p\in F\langle X\rangle$ is an \textit{identity} of an $F$-algebra $A$ if $p(A)=\{0\}$, and $p$ is \textit{central} if $p$ is not an identity of $A$ but $p(A)$ is contained in the center $Z(A)$ of $A$. For convenience, a polynomial $p\in C\langle x_1,\cdots,x_m\rangle$ is called \textit{non-central} if $p$ is not an identity as well as central. For convenience, for subsets $A_1,\ldots,A_n$ of an algebra $A$, we use the symbol $A_1\cdots A_n$ for the set consisting of all products of the form: $a_1\cdots a_n$ where $a_1\in A_1,\ldots,a_n\in A_n$. In case that $A_1,\ldots,A_n$ are all equal to a subset $B$ of $A$, we write $B^n$ instead of $A_1\cdots A_n$.

By using Lemma~\ref{conjugate}, we obtain the following result.

\begin{theorem}\label{p1p2}
	Let $\mathbb{H}(a,b)_F$ be a division quaternion algebra over a field $F$ of characteristic different from $2$ and let $p_1,p_2\in F\langle X\rangle$ be multilinear and non-central. Then, $\mathbb{H}_F=p_1(\mathbb{H}_F)p_2(\mathbb{H}_F)$ in either of the following cases:
	\begin{enumerate}[\rm (i)]
		\item $F$ is a quadratically closed field.
		\item $F$ is a Pythagorean field and $a=b=1$.
	\end{enumerate}
\end{theorem}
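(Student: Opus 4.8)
The plan is to reduce the whole statement to the single assertion that every quaternion is a product of two pure quaternions, i.e. that $s_2(\mathbb{H}_F)\cdot s_2(\mathbb{H}_F)=\mathbb{H}_F$, and then feed this through Lemma~\ref{s2 contain}. First I would observe that since $p_1$ and $p_2$ are non-central, Lemma~\ref{s2 contain} applies in both cases (i) and (ii), giving $s_2(\mathbb{H}_F)\subseteq p_1(\mathbb{H}_F)$ and $s_2(\mathbb{H}_F)\subseteq p_2(\mathbb{H}_F)$. Hence
\[
s_2(\mathbb{H}_F)\cdot s_2(\mathbb{H}_F)\subseteq p_1(\mathbb{H}_F)\cdot p_2(\mathbb{H}_F)\subseteq \mathbb{H}_F,
\]
so it is enough to prove that the leftmost set is already all of $\mathbb{H}_F$; the squeeze then forces $p_1(\mathbb{H}_F)p_2(\mathbb{H}_F)=\mathbb{H}_F$. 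Note this uniform argument needs no case split on whether an individual image equals $s_2(\mathbb{H}_F)$ or $\mathbb{H}_F$.

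Next I would invoke Theorem~\ref{s2}(i): since in both (i) and (ii) the field is Pythagorean of characteristic $\neq 2$, the set $s_2(\mathbb{H}_F)$ is exactly the space $\{\alpha i+\beta j+\gamma k\mid \alpha,\beta,\gamma\in F\}$ of pure quaternions, which I identify with the reduced-trace-zero elements. The central observation is that this set is invariant under conjugation $x\mapsto gxg^{-1}$, because the reduced trace is conjugation-invariant; consequently $s_2(\mathbb{H}_F)\cdot s_2(\mathbb{H}_F)$ is conjugation-invariant as well, via $g(uv)g^{-1}=(gug^{-1})(gvg^{-1})$. By Lemma~\ref{conjugate}, every $x\in\mathbb{H}_F$ is conjugate to an element in the normal form $a_0+ri$ with $a_0,r\in F$. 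Therefore it suffices to show that each such $a_0+ri$ lies in $s_2(\mathbb{H}_F)\cdot s_2(\mathbb{H}_F)$, after which conjugating back by the element supplied by Lemma~\ref{conjugate} yields the general $x$.

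The final step is an explicit factorization. Writing $i^2=-a$, $j^2=-b$ and $k=ij$ (so $b\neq 0$), a direct computation gives $jk=bi$, whence
\[
j\Bigl(-\tfrac{a_0}{b}\,j+\tfrac{r}{b}\,k\Bigr)=-\tfrac{a_0}{b}\,j^2+\tfrac{r}{b}\,jk=a_0+ri,
\]
and both factors $j$ and $-\tfrac{a_0}{b}j+\tfrac{r}{b}k$ are pure, hence lie in $s_2(\mathbb{H}_F)$. This places $a_0+ri$ in $s_2(\mathbb{H}_F)\cdot s_2(\mathbb{H}_F)$, completing the chain of reductions.

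I expect the main obstacle to be organizational rather than computational: the crux is to argue cleanly that conjugation-invariance of the product set, combined with the normal form from Lemma~\ref{conjugate}, reduces an arbitrary target to the two-parameter family $a_0+ri$, and then to verify that the exhibited factorization genuinely lands both factors inside the pure-quaternion space $s_2(\mathbb{H}_F)$ (which is where the Pythagorean hypothesis enters, through Theorem~\ref{s2}). Once this scaffolding is in place, the explicit product $j\cdot\bigl(-\tfrac{a_0}{b}j+\tfrac{r}{b}k\bigr)$ finishes the proof, and the two cases (i) and (ii) are handled simultaneously since they affect the argument only through the shared Pythagorean, characteristic $\neq 2$ assumptions.
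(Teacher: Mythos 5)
Your proposal is correct and follows essentially the same route as the paper's proof: conjugate an arbitrary element to the normal form $a_0+ri$ via Lemma~\ref{conjugate}, factor it as $j$ times a pure quaternion, identify the pure quaternions with $s_2(\mathbb{H}_F)$ via Theorem~\ref{s2} (both cases being Pythagorean), and transfer everything into $p_1(\mathbb{H}_F)p_2(\mathbb{H}_F)$ via Lemma~\ref{s2 contain} together with conjugation-invariance of images of multilinear polynomials. The only differences are organizational (you isolate the claim $s_2(\mathbb{H}_F)^2=\mathbb{H}_F$ up front rather than applying Lemma~\ref{s2 contain} to the two explicit factors at the end), and your factor $-\tfrac{a_0}{b}j+\tfrac{r}{b}k$ is in fact the corrected version of the paper's $-xa^{-1}j+rk$, whose coefficients are off except in the case $a=b=1$.
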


\begin{proof}
	Let $\alpha\in\mathbb{H}_F$. According to Lemma~\ref{conjugate}, $h\alpha h^{-1}=x+ri$ for some $x,r\in F$ and $h\in\mathbb{H}_F$ which is nonzero. Assume that $\mathbb{H}_F=\mathbb{H}(a,b)_F$. Then, direct calculation shows that $$\alpha=h^{-1}jh\cdot h^{-1}(-xa^{-1}j+rk)h.$$ Using Theorem~\ref{s2}, both $j$ and $-xa^{-1}j+rk$ belong to $s_2(\mathbb{H}_F)$. On the other hand, Lemma~\ref{s2 contain} yields that both $j$ and $-xa^{-1}j+rk$ respectively belong to $p_1(\mathbb{H}_F)$ and $p_2(\mathbb{H}_F)$, so $h^{-1}jh$ and $h^{-1}(-xa^{-1}j+rk)h$. The proof is completed.
\end{proof}

In the case of quaternion algebras which is not a division ring, we also obtain an interesting result without the assumption that the characteristic of a field is not $2$ as follows.

\begin{theorem}\label{p1p22}
	Let $\mathbb{H}_F$ be a quaternion algebra over a field $F$ and let $p_1,p_2\in F\langle X\rangle$ be multilinear and non-central. If $\mathbb{H}_F$ is not a division ring, then $\mathbb{H}_F=p_1(\mathbb{H}_F)p_2(\mathbb{H}_F)$.
\end{theorem}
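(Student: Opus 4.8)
The plan is to push everything down to the matrix ring and then settle a concrete factorization question about $2\times 2$ matrices. Since $\mathbb{H}_F$ is not a division ring, the isomorphism $\mathbb{H}_F\cong \mathrm{M}_2(F)$ recorded at the start of Section~2 identifies $s_2(\mathbb{H}_F)$ with the space $\mathrm{sl}_2(F)$ of trace-zero matrices. Because $p_1$ and $p_2$ are non-central, for each $i$ the image $p_i(\mathbb{H}_F)$ is neither $\{0\}$ nor contained in $F$, so Proposition~\ref{not division}(i) forces $s_2(\mathbb{H}_F)\subseteq p_i(\mathbb{H}_F)$ for $i=1,2$. Hence
$$s_2(\mathbb{H}_F)\,s_2(\mathbb{H}_F)\subseteq p_1(\mathbb{H}_F)\,p_2(\mathbb{H}_F)\subseteq\mathbb{H}_F,$$
and it suffices to prove the purely matrix-theoretic identity $\mathrm{sl}_2(F)\,\mathrm{sl}_2(F)=\mathrm{M}_2(F)$: every $2\times 2$ matrix is a product of two trace-zero matrices.

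To establish this I would first exploit that \emph{being a product of two trace-zero matrices is invariant under similarity}, since conjugation preserves the trace: if $N=PMP^{-1}$ factors as $N=AB$ with $A,B$ trace-zero, then $M=(P^{-1}AP)(P^{-1}BP)$ is such a factorization of $M$. One may therefore replace $M$ by its rational canonical form. Over an arbitrary field a $2\times 2$ matrix is either scalar or similar to the companion matrix $C=\begin{pmatrix}0&-d\\1&t\end{pmatrix}$ of its characteristic polynomial $x^2-tx+d$, so only these two model cases need to be handled.

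For a scalar matrix $\lambda I$, take $A_0=\begin{pmatrix}0&1\\1&0\end{pmatrix}$, which is trace-zero with $A_0^2=I$; then $\lambda I=(\lambda A_0)A_0$ with both factors of trace zero. For the companion matrix $C$, take $A=\begin{pmatrix}1&t\\0&-1\end{pmatrix}$, which has trace zero and satisfies $A^2=I$, and set $B=AC$; a direct computation gives $\operatorname{tr}(B)=t-t=0$ while $AB=A^2C=C$. Both constructions are valid in every characteristic, which yields $\mathrm{sl}_2(F)\,\mathrm{sl}_2(F)=\mathrm{M}_2(F)$ and hence the theorem.

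The one point demanding care — and the reason the argument must depart from that of Theorem~\ref{p1p2} — is that here no restriction on $\operatorname{char}F$ is imposed. The conjugation-and-quadratic-form device of Lemma~\ref{conjugate} is unavailable in characteristic $2$, so the main obstacle is to produce the two explicit, characteristic-free factorizations above rather than to lean on a diagonalization or splitting argument; checking that the chosen factors $A_0$ and $A$ remain trace-zero and involutory when $2=0$ is exactly the delicate step that makes the statement hold uniformly.
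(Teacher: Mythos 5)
Your proof is correct, and it takes a genuinely different (and more self-contained) route than the paper. The paper's own proof consists of exactly two steps: the isomorphism $\mathbb{H}_F\cong\mathrm{M}_2(F)$ from \cite[Main Theorem 5.4.4 and Theorem 6.4.11]{Bo_Vo_21}, followed by a black-box citation of \cite[Proposition 5.3]{Pa_DaDuSo_23} (an unpublished companion preprint); nothing about traceless factorizations is actually proved in the paper. You share the first reduction, but then replace the citation by an internal argument: non-centrality rules out the cases $p_i(\mathbb{H}_F)=\{0\}$ and $p_i(\mathbb{H}_F)\subseteq F$ in the trichotomy of Proposition~\ref{not division}(i), so $s_2(\mathbb{H}_F)\subseteq p_i(\mathbb{H}_F)$ for $i=1,2$, and the theorem reduces to the identity $\mathrm{sl}_2(F)\,\mathrm{sl}_2(F)=\mathrm{M}_2(F)$, which you establish by similarity invariance of the property together with two explicit factorizations. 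I verified the computations: for the scalar case, $A_0=\left(\begin{smallmatrix}0&1\\1&0\end{smallmatrix}\right)$ is traceless with $A_0^2=I$, so $\lambda I=(\lambda A_0)A_0$; for the companion matrix $C$ of $x^2-tx+d$, the matrix $A=\left(\begin{smallmatrix}1&t\\0&-1\end{smallmatrix}\right)$ is traceless with $A^2=I$, and $B=AC=\left(\begin{smallmatrix}t&t^2-d\\-1&-t\end{smallmatrix}\right)$ is traceless, giving $C=AB$; both work in every characteristic (in characteristic $2$ the trace of $A$ is $1+1=0$), and every non-scalar $2\times2$ matrix is nonderogatory, hence similar to the companion matrix of its characteristic polynomial, so the case analysis is exhaustive. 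Your closing remark is also apt: Lemma~\ref{conjugate} and the mechanism of Theorem~\ref{p1p2} genuinely require characteristic different from $2$, so a characteristic-free argument of this kind (or the paper's external citation) is unavoidable here. What each approach buys: the paper's citation is shorter and the cited proposition presumably handles matrices of arbitrary size uniformly, but it makes Theorem~\ref{p1p22} depend on a result not reproduced anywhere in the paper; your half-page argument makes the theorem verifiable from Section 2 plus elementary linear algebra.
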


\begin{proof}
	If $\mathbb{H}_F$ is not a division ring, then by using  \cite[Main Theorem 5.4.4 and Theorem 6.4.11]{Bo_Vo_21}, it follows that $\mathbb{H}_F$ is isomorphic to the matrix ring $\mathrm{M}_2(F)$ over $F$ as an $F$-algebra. Then, the proof is complete by using \cite[Proposition 5.3]{Pa_DaDuSo_23}.
\end{proof}

Such situations may be difficult in case that quaternion algebras are division rings as well as have characteristic $2$. We will together consider the standard polynomial of degree two. Let $\alpha=bi+cj+dk\in\mathbb{H}_F$ where $b,c,d\in F$. Using Lemma~\ref{s2-2}, we find $x_1,x_2,x_3,x_4,x_5,x_6\in F$ such that $$\begin{cases}
	x_1x_5-x_2x_4=d,\\
	x_3x_4-x_1x_6=c,\\
	x_2x_6-x_3x_5=b,
\end{cases}$$ in which $$\begin{cases}
	x_4=cx_3-x_2d,\\
	x_5=x_1d-bx_3,\\
	x_6=bx_2-x_1c,
\end{cases}$$ so $x_1x_4+x_2x_5+x_3x_6=0$ and $$\alpha=bi+cj+dk=(x_1i+x_2j+x_3k)(x_4i+x_5j+x_6k).$$According to Theorem~\ref{s2}, it follows that $$s_2(\mathbb{H_F})=\begin{cases}
	\{\alpha i+\beta j+\gamma k\mid \alpha,\beta,\gamma \in F\}\text{ if the characteristic of $F$}\\
	\text{is not $2$ and $F$ is a Pythagorean field,}\\\{\alpha j^2 +\beta j+\gamma k\mid \alpha,\beta,\gamma \in F\}\text{ if the characteristic of $F$}\\\text{is $2$}.
\end{cases}$$ Therefore, if the characteristic of $F$ is not $2$ and $F$ is a Pythagorean field, then $s_2(\mathbb{H}_F)$ is contained in $s_2(\mathbb{H}_F)s_2(\mathbb{H}_F)=s_2(\mathbb{H}_F)^2$, and thus $$s_2(\mathbb{H}_F)\subseteq s_2(\mathbb{H}_F)^2\subseteq\cdots\subseteq s_2(\mathbb{H}_F)^k\subseteq\cdots.$$ A natural question is that there is a positive integer $n$ such that $s_2(\mathbb{H}_F)^k=s_2(\mathbb{H}_F)^{k+1}$ for all $k\geq n$, especially $s_2(\mathbb{H}_F)^n=\mathbb{H}_F$. From Theorem~\ref{p1p2} and Theorem~\ref{p1p22}, we conclude that $\mathbb{H}_F=s_2(\mathbb{H}_F)^2$ in either of the following cases:
\begin{enumerate}[\rm (i)]
	\item $\mathbb{H}_F$ is not a division ring.
	\item The characteristic of $F$ is not $2$ and \begin{enumerate}
		\item either $F$ is a quadratically closed field
		\item or $F$ is a Pythagorean field and $i^2=j^2=-1$.
	\end{enumerate}
\end{enumerate}

Moreover,  we can show that if $\mathbb{H}_F$ is a division quaternion algebra over a field $F$ of characteristic $2$ and $$p=s_2(x_1,x_2)+s_2(x_3,x_4)s_2(x_5,x_6)\in F\langle x_1,\ldots,x_6\rangle,$$ then $p(\mathbb{H}_F)=\mathbb{H}_F$.  Indeed, let $\alpha\in\mathbb{H}_F$. According to Theorem~\ref{s2}, since $s_2(\mathbb{H}_F)=s_2(s_2(\mathbb{H}_F))$, it  follows that there exist nonzero $y$ and $z$ which belong to $s_2(\mathbb{H}_F)$ such that $s_2(y,z)$ is nonzero, so 
\begin{eqnarray*}
	\alpha&=&\alpha s_2(y,z)^{-1}s_2(y,z)\\
	&=&s_2(\alpha s_2(y,z)^{-1}y,z)+s_2(z,\alpha s_2(y,z)^{-1})y\in p(\mathbb{H}_F).
\end{eqnarray*} We also obtain the similar result if $$p=s_2(x_1,x_2)s_2(x_3,x_4)+s_2(x_5,x_6)s_2(x_7,x_8)\in F\langle x_1,\ldots,x_8\rangle$$ by expressing $s_2(\alpha s_2(y,z)^{-1}y,z)$ as $s_2(\alpha s_2(y,z)^{-1}yz^{-1},z)z$.

\section{Trace vanishing multilinear polynomials}

Let $R$ be a ring. Standardly, we use the notations $\mathrm{M}_n(R)$ and $\mathrm{sl}_n(R)$ as the ring of matrices of size $n$ over $R$ and the set of all matrices of $\mathrm{M}_n(R)$ whose trace are zero. Similarly, we also replace the field in free algebra by a commutative ring. According to \cite{Pa_KaMaRo_20}, a polynomial $p\in Z(R)\langle X\rangle$ is called \textit{trace vanishing} of $\mathrm{M}_n(R)$ if $p(\mathrm{M}_n(R))$ is contained in $\mathrm{sl}_n(R)$. For example, it is not difficult to see that a special polynomial, which is of the form: $$s_m=\sum_{\sigma\in S_m}\mathrm{sgn}(\sigma)x_{\sigma(1)}\cdots x_{\sigma(m)}$$ where $\mathrm{sgn}(\sigma)$ is the sign of $\sigma$, is trace vanishing if $m$ is even. In 2016, A. Kanel-Belov, S. Malev, L. Rowen describle all possible images of trace vanishing  polynomials for $3\times 3$ (see \cite[Theorem 3, Theorem 4]{Pa_KaMaRo_16}). On the other hand, in the case of multilinear polynomials of three variables, using \cite[Lemma 14]{Pa_Zak_13}, if $$p=\sum_{\sigma\in S_3}\lambda_\sigma x_{\sigma(1)}x_{\sigma(2)}x_{\sigma(3)}$$ is trace vanishing, then $$\sum_{\sigma\in S_3}\lambda_\sigma=\sum_{\sigma\in A_3}\lambda_\sigma=0$$ where $A_3$ is the alternating group of $S_3$. In view of \cite{Pa_BreKl_11}, $p$ is trace vanising if and only if $p$ is a sum of an identity of $\mathrm{M}_n(R)$ and a sum of commutators in $R\langle X\rangle$. 

In this section, we focus on trace vanishing multilinear polynomials evaluated on quaternion algebras. In the first, we recall the \textit{trace} in quaternion algebras. Let $\mathbb{H}_F$ be a quaternion algebra over a field $F$. For any $\alpha=a_0+a_1i+a_2j+a_3\in\mathbb{H}_F$ where $a_0,a_1,a_2,a_3\in F$, the symbol $\mathrm{trace}(\alpha)$ is denoted by the \textit{trace} of $\alpha$ and defined as $$	\mathrm{trace}(\alpha)=\begin{cases}
2a_0\text{ if the characteristic of $F$ is not $2$},\\a_1\text{ if the characteristic of $F$ is $2$}.
\end{cases}$$ We use the symbol $\mathbb{H}_F^0$  for the set of elements of $\mathbb{H}_F$ whose are trace zero. Note that $\mathbb{H}_F^0$ is a vector space over $F$ and $$\mathbb{H}^0_F=\begin{cases}
\{\alpha i+\beta j+\gamma k\mid \alpha,\beta,\gamma \in F\}\text{ if the characteristic of $F$ is not $2$},\\\{\alpha +\beta j+\gamma k\mid \alpha,\beta,\gamma \in F\}\text{ if the characteristic of $F$ is $2$}.
\end{cases}$$ From Theorem~\ref{s2}, it follows:
\begin{enumerate}
	\item If the characteristic of $F$ is $2$, then $s_2(\mathbb{H}_F)=\mathbb{H}^0_F$.
	\item If the characteristic of $F$ is $2$, then $s_2(\mathbb{H}_F)$ is contained in $\mathbb{H}^0_F$. In particular, if $F$ is a Pythagorean field, then $s_2(\mathbb{H}_F)=\mathbb{H}^0_F$.
\end{enumerate}

 Similarly in \cite{Pa_KaMaRo_16}, a polynomial $p\in F\langle X\rangle$ is called \textit{trace vanishing} of $\mathbb{H}_F$ if $p(\mathbb{H}_F)$ is contained in $\mathbb{H}_F^0$. 
 
 \begin{prop}
 	Let $\mathbb{H}_F$ be a quaternion algebra over a field $F$ and let $p\in F\langle X\rangle$ be multilinear of degree at most four. If $p(\mathbb{H}_F)$ is neither $\{0\}$ nor $\mathbb{H}_F$, then $p$ is trace vanishing. In particular, if $p\in F\langle x_1,x_2\rangle$, then $p(\mathbb{H}_F)=\mathbb{H}_F^0$ in either of the following cases:
 	\begin{enumerate}[\rm (i)]
 		\item The characteristic of $F$ is not $2$ and $F$ is a Pythagorean field.
 		\item The characteristic of $F$ is $2$.
 	\end{enumerate}
 \end{prop}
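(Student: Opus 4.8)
The plan is to read off both assertions directly from the classification theorems already established, after one elementary identification of the ambient sets with $\mathbb{H}_F^0$. The key preliminary observation is that the sets appearing in Theorems~\ref{2} and~\ref{34} are nothing but $\mathbb{H}_F^0$. When the characteristic of $F$ is not $2$ this is immediate, since by definition $\mathbb{H}_F^0=\{\alpha i+\beta j+\gamma k\mid \alpha,\beta,\gamma\in F\}$. When the characteristic of $F$ is $2$, I would use that $j^2$ is a \emph{nonzero scalar} of $F$, which is part of the defining relations of a quaternion algebra in characteristic $2$; hence as $\alpha$ ranges over $F$ the product $\alpha j^2$ ranges over all of $F$, so that $\{\alpha j^2+\beta j+\gamma k\mid \alpha,\beta,\gamma\in F\}=\{\alpha+\beta j+\gamma k\mid \alpha,\beta,\gamma\in F\}=\mathbb{H}_F^0$.

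Granting this identification, the first assertion is a short case check on the degree, recalling that the standing convention makes the relevant degrees $2,3,4$. For degree two I would invoke Theorem~\ref{2}: if $p(\mathbb{H}_F)$ is neither $\{0\}$ nor $\mathbb{H}_F$, then $p(\mathbb{H}_F)$ is contained in $\{\alpha i+\beta j+\gamma k\mid \alpha,\beta,\gamma\in F\}$ when the characteristic is not $2$, and in $\{\alpha j^2+\beta j+\gamma k\mid \alpha,\beta,\gamma\in F\}$ when the characteristic is $2$, that is, in $\mathbb{H}_F^0$. For degrees three and four I would invoke Theorem~\ref{34}, whose cases (i) and (ii) are exactly the excluded values $\{0\}$ and $\mathbb{H}_F$, while cases (iii) and (iv) again place $p(\mathbb{H}_F)$ inside $\mathbb{H}_F^0$. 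In every surviving case one gets $p(\mathbb{H}_F)\subseteq\mathbb{H}_F^0$, which is precisely the assertion that $p$ is trace vanishing.

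For the ``in particular'' part I would appeal once more to Theorem~\ref{2}, now to its sharper, exact conclusion. Under hypothesis (i) (characteristic not $2$ and $F$ Pythagorean) that theorem gives $p(\mathbb{H}_F)\in\{\{0\},\{\alpha i+\beta j+\gamma k\mid \alpha,\beta,\gamma\in F\},\mathbb{H}_F\}$, and under hypothesis (ii) (characteristic $2$) it gives $p(\mathbb{H}_F)\in\{\{0\},\{\alpha j^2+\beta j+\gamma k\mid \alpha,\beta,\gamma\in F\},\mathbb{H}_F\}$. In both cases the middle set equals $\mathbb{H}_F^0$ by the identification above, so discarding the two forbidden values $\{0\}$ and $\mathbb{H}_F$ forces the equality $p(\mathbb{H}_F)=\mathbb{H}_F^0$.

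I do not anticipate a genuine obstacle here: the substantive work is entirely contained in the earlier Theorems~\ref{2} and~\ref{34}, and the present statement is essentially a reformulation of their conclusions in terms of the trace. The one point that must not be overlooked is the characteristic-$2$ identification $\{\alpha j^2+\beta j+\gamma k\mid \alpha,\beta,\gamma\in F\}=\mathbb{H}_F^0$, which hinges on $j^2$ being a nonzero scalar rather than a genuine quaternion; without recording this, the coincidence with the trace-zero space would remain hidden.
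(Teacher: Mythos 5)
Your proposal is correct and follows essentially the same route as the paper, whose proof is simply the one-line remark that the statement ``is inferred directly from Theorem~\ref{2} and Theorem~\ref{34}.'' Your explicit justification of the characteristic-$2$ identification $\{\alpha j^2+\beta j+\gamma k\mid \alpha,\beta,\gamma\in F\}=\mathbb{H}_F^0$ (via $j^2$ being a nonzero scalar) is exactly the point the paper records, somewhat tersely, in the discussion of $\mathbb{H}_F^0$ preceding the proposition, so you have merely made the intended argument fully explicit.
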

 
 \begin{proof}
 	This is inferred directly from Theorem~\ref{2} and Theorem~\ref{34}.
 \end{proof}

\begin{prop}
	Let $\mathbb{H}_F$ be a quaternion algebra over a field $F$ and let $p\in F\langle X\rangle$ be multilinear. If $p(\mathbb{H}_F)$ is neither $\{0\}$ nor $F$ as well as $\mathbb{H}_F$ and $p$ is trace vanishing, then $p(\mathbb{H}_F)=\mathbb{H}_F^0$ in either of the following cases:
	\begin{enumerate}[\rm (i)]
		\item $\mathbb{H}_F$ is not a division ring.
		\item $\mathbb{H}_F$ is a division ring and the characteristic of $F$ is not $2$ and \begin{enumerate}[\rm (1)]
			\item either $F$ is a quadratically closed field
			\item or $F$ is a Pythagorean field and $i^2=j^2=-1$.
		\end{enumerate}
	\end{enumerate}
\end{prop}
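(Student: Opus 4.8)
The plan is to observe that the trace-vanishing hypothesis already delivers one inclusion for free, so that the whole statement reduces to pinning down $p(\mathbb{H}_F)$ by means of the classification theorems proved earlier. Indeed, by the definition of trace vanishing we have $p(\mathbb{H}_F)\subseteq\mathbb{H}_F^0$ immediately; hence it suffices to establish the reverse inclusion $\mathbb{H}_F^0\subseteq p(\mathbb{H}_F)$ in each of the two cases. Throughout I would keep in mind the two descriptions of the trace-zero space: in characteristic different from $2$ one has $\mathbb{H}_F^0=\{\alpha i+\beta j+\gamma k\mid\alpha,\beta,\gamma\in F\}$, while for $\mathbb{H}_F\cong\mathrm{M}_2(F)$ the reduced trace corresponds to the ordinary matrix trace, so that $\mathbb{H}_F^0$ is precisely $\mathrm{sl}_2(F)$.

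For case (i), where $\mathbb{H}_F$ is not a division ring, I would first invoke the isomorphism $\mathbb{H}_F\cong\mathrm{M}_2(F)$ and then apply Proposition~\ref{not division}: since by hypothesis $p(\mathbb{H}_F)$ is neither $\{0\}$ nor $F$, that proposition forces $p(\mathbb{H}_F)\supseteq s_2(\mathbb{H}_F)$. The classical Albert--Muckenhoupt/Shoda equality $s_2(\mathrm{M}_2(F))=\mathrm{sl}_2(F)$ recorded in the introduction then identifies $s_2(\mathbb{H}_F)$ with $\mathrm{sl}_2(F)=\mathbb{H}_F^0$. Combining $\mathbb{H}_F^0\subseteq p(\mathbb{H}_F)$ with the trace-vanishing inclusion $p(\mathbb{H}_F)\subseteq\mathbb{H}_F^0$ yields $p(\mathbb{H}_F)=\mathbb{H}_F^0$. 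Note that this argument needs no restriction on the characteristic, precisely because the matrix identity $s_2=\mathrm{sl}_2$ holds over an arbitrary field.

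For case (ii), where $\mathbb{H}_F$ is a division ring of characteristic different from $2$ under one of the two field conditions, I would apply Theorem~\ref{division}, which gives $p(\mathbb{H}_F)\in\{\{0\},F,s_2(\mathbb{H}_F),\mathbb{H}_F\}$. The three values excluded by hypothesis leave only $p(\mathbb{H}_F)=s_2(\mathbb{H}_F)$. To finish I would observe that both sub-hypotheses make $F$ Pythagorean (a quadratically closed field is Pythagorean, since every element, in particular every sum of two squares, is a square), so Theorem~\ref{s2}(i) applies and gives $s_2(\mathbb{H}_F)=\{\alpha i+\beta j+\gamma k\mid\alpha,\beta,\gamma\in F\}=\mathbb{H}_F^0$. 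Hence $p(\mathbb{H}_F)=s_2(\mathbb{H}_F)=\mathbb{H}_F^0$.

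The argument is essentially a bookkeeping assembly of the earlier results, so there is no deep obstacle to anticipate; the points that require care are the two identifications of $\mathbb{H}_F^0$ (with $\mathrm{sl}_2(F)$ in case (i) and with $s_2(\mathbb{H}_F)$ in case (ii)) and the observation that each field hypothesis in (ii) guarantees Pythagoreanity, which is exactly what upgrades the containment $s_2(\mathbb{H}_F)\subseteq\mathbb{H}_F^0$ to an equality. One should also note that the clause $i^2=j^2=-1$ (i.e.\ $a=b=1$), though not needed for the final identification $s_2(\mathbb{H}_F)=\mathbb{H}_F^0$, must be retained simply because Theorem~\ref{division} requires it.
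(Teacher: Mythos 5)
Your proposal is correct and takes essentially the same route as the paper: the paper's own proof is just a one-line citation of Proposition~\ref{not division} and Theorem~\ref{division}, and your argument spells out exactly the bookkeeping that citation leaves implicit (the trace-vanishing inclusion $p(\mathbb{H}_F)\subseteq\mathbb{H}_F^0$, the identification $s_2(\mathbb{H}_F)=\mathrm{sl}_2(F)=\mathbb{H}_F^0$ via $\mathbb{H}_F\cong\mathrm{M}_2(F)$ in case (i), and the elimination argument plus Theorem~\ref{s2} with Pythagoreanity in case (ii)). The details you supply, including the remark that quadratically closed fields are Pythagorean, are accurate and match the identifications already recorded in Sections 2 and 4 of the paper.
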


\begin{proof}
	It follows immediately according to Proposition~\ref{not division} and Theorem~\ref{division}.
\end{proof}

As seen above, the investigation of the image of the standard polynomial $s_2$ of degree two plays an important role, especially we can apply the results of the rings of all $2\times 2$ matrices over fields to quaternion algebras which are not division rings. Therefore, we will end this section with an interesting observation. Now, we shift our attention to matrices. We show that if $p\in Z(R)\langle X\rangle$ is multilinear such that $p(\mathrm{M}_n(R))$ is contained in the union $\{0\}\cup (\mathrm{M}_n(R)\setminus\mathrm{sl}_n(R))$ of $\{0\}$ and the set difference $\mathrm{M}_n(R)\setminus\mathrm{sl}_n(R)$ of $\mathrm{M}_n(R)$ and $\mathrm{sl}_n(F)$ and $p(\mathrm{M}_n(R))\neq\{0\}$, then $p(\mathrm{M}_n(R))$ is contained in $Z(\mathrm{M}_n(R))$. The reason why we get attention to $\mathrm{sl}_n(R)$ is that $\mathrm{sl}_n(R)$ coincides with $s_2(\mathrm{M}_n(R))$ when $R$ is a field and the description of $s_2(\mathrm{M}_n(R))$ play an important role as mentioned above. To prove this result, we need to denote $e_{ij}$ by a matrix unit of $\mathrm{M}_n(R)$  which has only one nonzero entry with value 1 at the position $(i,j)$. The following lemma is pivotal which is already known.

\begin{lemma} {\rm\cite[Lemma 2]{Pa_Le_75}} \label{basis matrix}
	Let $R$ be a ring and let $p\in Z(R)\langle X\rangle$ be multilinear. If $a_1,\ldots,a_m\in R$ and $A_1,\ldots,A_m$ are matrix units, then $p(a_1A_1,\ldots,a_mA_m)$ is either a diagonal matrix or a matrix of the form: $a\cdot e_{ij}$ for some $a\in R$ and $i\neq j$.
\end{lemma}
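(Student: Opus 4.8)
The plan is to reduce the statement to an elementary counting argument about directed walks, after expanding $p$ into monomials. First I would write $A_\ell=e_{s_\ell t_\ell}$ for the given matrix units and expand
$$p(a_1A_1,\ldots,a_mA_m)=\sum_{\sigma\in S_m}\lambda_\sigma\,(a_{\sigma(1)}e_{s_{\sigma(1)}t_{\sigma(1)}})\cdots(a_{\sigma(m)}e_{s_{\sigma(m)}t_{\sigma(m)}}).$$
Using the multiplication rule $(a\,e_{st})(b\,e_{uv})=\delta_{tu}\,(ab)\,e_{sv}$ for matrix units over $R$, each monomial vanishes unless the chain condition $t_{\sigma(\ell)}=s_{\sigma(\ell+1)}$ holds for $\ell=1,\ldots,m-1$; when it does hold, the monomial equals $\lambda_\sigma\,(a_{\sigma(1)}\cdots a_{\sigma(m)})\,e_{s_{\sigma(1)}\,t_{\sigma(m)}}$. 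Thus every surviving monomial is a scalar, lying in $R$ since $\lambda_\sigma\in Z(R)$, times a single matrix unit whose row index is the ``start'' $s_{\sigma(1)}$ and whose column index is the ``end'' $t_{\sigma(m)}$ of the corresponding composable ordering.

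Next I would encode this combinatorially: regard each $A_\ell=e_{s_\ell t_\ell}$ as a directed edge $s_\ell\to t_\ell$ in a multigraph on the index set $\{1,\ldots,n\}$. A nonvanishing permutation $\sigma$ is exactly an ordering of these $m$ edges into a directed trail traversing every edge once, and the resulting matrix unit is $e_{uv}$ where $u$ is the first vertex and $v$ the last vertex of the trail. The crucial observation is that the start and end of such a trail are constrained by the net-degree function $d(x)=\mathrm{out}(x)-\mathrm{in}(x)$, which depends only on the \emph{unordered} multiset $\{A_1,\ldots,A_m\}$ and not on $\sigma$: a trail from $u$ to $v$ with $u\neq v$ forces $d(u)=1$, $d(v)=-1$ and $d(x)=0$ otherwise, while a closed trail with $u=v$ forces $d\equiv 0$.

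This yields a clean dichotomy, which is the heart of the argument. If $d\equiv 0$, then every nonvanishing monomial corresponds to a closed trail, so it is a scalar multiple of a diagonal matrix unit $e_{uu}$, and summing over $\sigma$ keeps the result diagonal. If instead $d(x)\neq 0$ for some $x$, then no closed trail can occur, and moreover the only possible endpoints are the unique vertices $u$ with $d(u)=1$ and $v$ with $d(v)=-1$; hence every nonvanishing monomial is a scalar multiple of the single off-diagonal unit $e_{uv}$ with $u\neq v$, and the whole sum collapses to $a\,e_{uv}$ with $a\in R$. In either case the conclusion follows.

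I expect the main obstacle to be nailing down this dichotomy rigorously, specifically showing that the row and column of a surviving monomial are forced independently of $\sigma$, so that the off-diagonal case cannot mix several positions or mix with diagonal contributions. I would handle this purely through the net-degree invariant $d$, using only the elementary necessary condition that any such trail must satisfy; note that I never need the \emph{existence} of an Eulerian trail, only that any composable ordering has its endpoints pinned down by $d$, which keeps the argument short and self-contained.
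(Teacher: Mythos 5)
The paper does not actually prove this lemma: it is imported verbatim from Leron \cite[Lemma 2]{Pa_Le_75}, so there is no in-paper argument to compare against. Your proof is correct and self-contained, and it follows the standard line of reasoning behind Leron's result: expand $p$ into monomials, note that a surviving permutation is precisely a composable ordering of the matrix units, and use the net-degree invariant $d(x)=\mathrm{out}(x)-\mathrm{in}(x)$ --- which depends only on the unordered multiset of matrix units --- to pin the row and column of every surviving monomial independently of $\sigma$. Your key point that only the \emph{necessity} of the endpoint condition is needed (never the existence of an Eulerian trail) is exactly what makes the argument clean. One degenerate case is worth stating explicitly for completeness: when $d\not\equiv 0$ but the degree sequence is not of the form one $+1$, one $-1$, and zeros elsewhere, no monomial survives at all and the value is the zero matrix, which is diagonal; your write-up silently subsumes this into the off-diagonal case, and either reading gives the conclusion of the lemma.
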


We are now ready to show the final result as mentioned above.

\begin{theorem}
	Let $R$ be a ring and let $p\in Z(R)\langle X\rangle$ be multilinear such that $p(\mathrm{M}_n(R))$ is contained in  $\{0\}\cup (\mathrm{M}_n(R)\setminus\mathrm{sl}_n(R))$ and $p(\mathrm{M}_n(R))\neq\{0\}$. Then, $p(\mathrm{M}_n(R))$ is contained in $Z(\mathrm{M}_n(R))$.
\end{theorem}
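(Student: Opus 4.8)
The plan is to combine the matrix-unit reduction furnished by Lemma~\ref{basis matrix} with the trace hypothesis to annihilate all off-diagonal contributions, and then to exploit the conjugation-invariance of the image to force the surviving diagonal matrices to be scalar.

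First I would reduce to evaluations on scalar multiples of matrix units. Writing each argument as $M_\ell=\sum_{i,j}m^{(\ell)}_{ij}e_{ij}$ and using the additivity of $p$ in each slot (immediate from multilinearity, since every monomial of $p$ is linear in each variable), every value $p(M_1,\dots,M_m)$ expands as a finite sum of basic evaluations $p(a_1A_1,\dots,a_mA_m)$ with $a_\ell\in R$ and each $A_\ell$ a matrix unit. By Lemma~\ref{basis matrix}, each such basic evaluation is either a diagonal matrix or of the form $c\,e_{ij}$ with $i\neq j$.

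The key observation is that a basic evaluation is itself a value of $p$, because $a_\ell A_\ell\in\mathrm{M}_n(R)$. Now any nonzero matrix of the form $c\,e_{ij}$ with $i\neq j$ has trace zero, hence lies in $\mathrm{sl}_n(R)\setminus\{0\}$, which contradicts the standing hypothesis $p(\mathrm{M}_n(R))\subseteq\{0\}\cup(\mathrm{M}_n(R)\setminus\mathrm{sl}_n(R))$. Therefore every off-diagonal basic evaluation must vanish, so every basic evaluation is diagonal; summing over the expansion, every value $p(M_1,\dots,M_m)$ is diagonal. In other words, $p(\mathrm{M}_n(R))$ consists entirely of diagonal matrices.

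It remains to upgrade \emph{diagonal} to \emph{scalar}, and this is where I expect the actual content to lie. The image is stable under conjugation by any $g\in\mathrm{GL}_n(R)$, since $g\,p(M_1,\dots,M_m)\,g^{-1}=p(gM_1g^{-1},\dots,gM_mg^{-1})$, the central coefficients $\lambda_\sigma$ being fixed by conjugation. Taking $g=I+e_{kl}$ with $k\neq l$, which is invertible with $g^{-1}=I-e_{kl}$ because $e_{kl}^2=0$, a direct computation gives $g\,\mathrm{diag}(d_1,\dots,d_n)\,g^{-1}=\mathrm{diag}(d_1,\dots,d_n)+(d_l-d_k)e_{kl}$. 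Since the left-hand side again lies in $p(\mathrm{M}_n(R))$ and hence is diagonal, the surviving off-diagonal term forces $d_k=d_l$; letting $k,l$ range over all pairs shows that every value of $p$ has all of its diagonal entries equal, so it is a scalar matrix, whence $p(\mathrm{M}_n(R))\subseteq Z(\mathrm{M}_n(R))$. The only points requiring care are the conjugation identity and the invertibility of $g$ over the possibly noncommutative ring $R$, but both are harmless because $e_{kl}^2=0$ and the entries of $g$ lie in $Z(R)$.
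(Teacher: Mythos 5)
Your reduction to diagonal values is correct and matches the paper's argument exactly: expand each slot over matrix units by multilinearity, note that each basic evaluation is itself a value of $p$, invoke Lemma~\ref{basis matrix}, and kill the off-diagonal possibility using the trace hypothesis (a nonzero $c\,e_{ij}$, $i\neq j$, lies in $\mathrm{sl}_n(R)\setminus\{0\}$). Your conjugation by $I+e_{kl}$ also correctly forces all diagonal entries of a value to be equal. But there is a genuine gap at the very last step: you conclude from ``scalar matrix'' that the value lies in $Z(\mathrm{M}_n(R))$. For a noncommutative ring $R$ this is false: $Z(\mathrm{M}_n(R))=Z(R)\cdot I_n$, whereas you have only shown that every value has the form $a\,I_n$ with $a\in R$. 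A matrix $a\,I_n$ with $a\notin Z(R)$ fails to commute with $r\,I_n$ whenever $ar\neq ra$, so it is not central. Since the theorem is stated for an arbitrary ring $R$, your proof establishes a strictly weaker conclusion.

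The missing step is exactly where the paper does additional work: it conjugates by $I_n+r\,e_{1j}$ for an \emph{arbitrary} $r\in R$, obtaining
\begin{equation*}
(I_n+r\,e_{1j})\,(a\,I_n)\,(I_n+r\,e_{1j})^{-1}=a\,I_n+(ra-ar)\,e_{1j},
\end{equation*}
which must again be diagonal, forcing $ra=ar$ for all $r$, i.e.\ $a\in Z(R)$. Note that your stated reason for restricting to conjugators with entries in $Z(R)$ is unfounded and would block this fix: the identity $g\,p(M_1,\dots,M_m)\,g^{-1}=p(gM_1g^{-1},\dots,gM_mg^{-1})$ holds for \emph{every} invertible $g\in\mathrm{M}_n(R)$, because what needs to commute past $g$ is not the entries of the arguments but only the coefficients $\lambda_\sigma$ of $p$, and these lie in $Z(R)$ by hypothesis. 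With $g=I_n+r\,e_{kl}$ (invertible since $(r\,e_{kl})^2=0$), your argument closes completely; as written, it does not prove the stated theorem.
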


\begin{proof}
	According to Lemma~\ref{basis matrix}, if $a_1,\ldots,a_m\in R$ and $A_1,\ldots,A_m$ are matrix units, then $p(a_1A_1,\ldots,a_mA_m)$ is either a diagonal matrix or a matrix of the form: $a\cdot e_{ij}$ for some $a\in R$ and $i\neq j$. Since $p(\mathrm{M}_n(R))$ is contained in the union $\{0\}\cup (\mathrm{M}_n(R)\setminus\mathrm{sl}_n(R))$, the last possible is eliminated. Since every matrix in $\mathrm{M}_n(R)$ is a linear combination of matrix units over $R$ and $p$ is multilinear, it follows that $p(\mathrm{M}_n(R))$ consists only diagonal matrices. Set $x\in p(\mathrm{M}_n(R))$ and write $x=p(B_1,\ldots,B_m)$ where $B_1,\ldots,B_m\in\mathrm{M}_n(R)$, so $$x=\sum_{i=1}^na_ie_{ii}$$ for some $a_1,\ldots,a_n\in R$. On the other hand, for each $j\in\{2,3,\ldots,n\}$, there exist $b_1^j,b_2^j,\ldots,b_n^j\in R$ such that \begin{eqnarray*}
		&&\sum_{i=1}^nb_i^je_{ii}\\
		&=&p\left((\mathrm{I}_n+e_{1j})B_1(\mathrm{I}_n+e_{1j})^{-1},\ldots,(\mathrm{I}_n+e_{1j})B_n(\mathrm{I}_n+e_{1j})^{-1}\right)\\
		&=&(\mathrm{I}_n+e_{1j})p(B_1,\ldots,B_m)(\mathrm{I}_n+e_{1j})^{-1}\\
		&=&\left(\sum_{i=1}^na_ie_{ii}\right)+(a_je_{1j}-a_1e_{1j}).
	\end{eqnarray*} Hence, $a_1=a_2=\cdots=a_n$, and put $a_1=a$. If $r\in R$, then by using the similar argument, for each $j\in\{2,3,\ldots,n\}$, there exist $c_1^j,\ldots,c_n^j\in R$ such that \begin{eqnarray*}
		&&\sum_{i=1}^nc_i^je_{ii}\\
		&=&p\left((\mathrm{I}_n+re_{1j})B_1(\mathrm{I}_n+r e_{1j})^{-1},\ldots,(\mathrm{I}_n+r e_{1j})B_m(\mathrm{I}_n+r e_{1j})^{-1}\right)\\
		&=&(\mathrm{I}_n+r e_{1j})p(B_1,\ldots,B_m)(\mathrm{I}_n+r e_{1j})^{-1}\\
		&=&\left(\sum_{i=1}^na_ie_{ii}\right)+rae_{1j}-are_{1j}, 
	\end{eqnarray*} that is, $ra=ar$. This means that $a$ belongs to the center of $R$. Therefore, $p(\mathrm{M}_n(R))$ is contained in $Z(\mathrm{M}_n(R))$.
\end{proof}

\end{document}